\newtheorem{theorem}{Theorem}[section]
\newtheorem{lemma}[theorem]{Lemma}
\newtheorem{corollary}[theorem]{Corollary}
\newtheorem{proposition}[theorem]{Proposition}
\newtheorem{remark}[theorem]{Remark}
\theoremstyle{definition}
\newtheorem{definition}[theorem]{Definition}
\newtheorem{example}[theorem]{Example}
\newcommand{\lrvert}[1]{{\left\vert{#1}\right\vert}} 
\newcommand{\lrangle}[1]{\langle #1 \rangle} 
\newcommand{\bsm}[1]{\begin{bsmallmatrix} #1 \end{bsmallmatrix}} 
\newcommand{\lcm}{\operatorname{lcm}} 
\newcommand{\bZ}{\mathbb{Z}} 
\newcommand{\bP}{\mathbb{P}} 
\newcommand{\bL}{\mathbb{L}} 
\newcommand{\bX}{\mathbb{X}} 
\newcommand{\coh}{\operatorname{coh}} 
\renewcommand{\dim}{\operatorname{dim}} 
\renewcommand{\mod}{\operatorname{mod}} 
\newcommand{\K}{\operatorname{\mathrm{K}}} 
\newcommand{\Der}{\operatorname{\mathrm{D}}} 
\newcommand{\perf}{\operatorname{perf}} 
\newcommand{\lperf}{\operatorname{lperf}} 
\newcommand{\rHom}{\operatorname{\mathbf{R}\!\Hom}} 
\newcommand{\lotimes}{\operatorname{\overset{\mathbf{L}}{\otimes}}} 
\newcommand{\Hom}{\operatorname{Hom}} 
\newcommand{\End}{\operatorname{End}} 
\newcommand{\Cone}{\operatorname{Cone}} 
\newcommand{\cx}{\operatorname{cx}} 
\newcommand{\glcx}{\operatorname{gl.cx}} 
\newcommand{\op}{\textup{op}} 
\begin{document}

\title{Serre Cyclotomic Algebras}

\author{Calvin Pfeifer}
\address{Mathematical Institute, University of Cologne, Weyertal 86-90, 50931 Köln, Germany}
\email{cpfeifer@math.uni-koeln.de}

\date{\today}

\subjclass[2020]{}
\keywords{}

\begin{abstract}
    We introduce a class of proper differential graded algebras which we call \emph{Serre cyclotomic}.
    They generalize fractionally Calabi–Yau algebras and categorify de la Peña's algebras of cyclotomic type. 
    Path algebras of affine type and (higher) canonical algebras are examples of Serre cyclotomic algebras.
    Their definition is related to Elias--Hogancamp's theory of categorical diagonalization. 
    We compute the categorical entropy of their Serre functors, in the sense of Dimitrov--Haiden--Katzarkov--Kontsevich, 
    and use this to determine which graded path algebras and which homologically smooth graded gentle algebras are Serre cyclotomic.
    Finally, we show that trivial extension algebras of Serre cyclotomic algebras have finite global complexity.
\end{abstract}

\maketitle

\setcounter{tocdepth}{2}
\tableofcontents


\section*{Introduction}

The trichotomy of quivers into finite, affine and indefinite type, 
according to the definiteness of their associated Tits quadratic form,
has various counterparts in e.g. representation theory, singularity theory, Lie theory, and cluster algebra theory.
The quivers of finite type are precisely those quivers $Q$ whose Coxeter transformation $\Phi_Q$ is periodic.
This may be seen as a shadow of the following representation theoretic characterization, c.f. \cite{MY}: 
A quiver $Q$ is of finite type if and only if its path algebra $A = KQ$ is \emph{fractionally Calabi--Yau},
i.e. the Serre functor $\boldsymbol{\nu}_A$ on the perfect derived category $\perf(A)$ is periodic up to a shift.
More precisely, there exist $m,n\in \mathbb{Z}$ with $n\neq 0$ and for every $V\in\perf(A)$ an isomorphism $\boldsymbol{\nu}_A^n(V) \cong V[m]$ which is functorial in $V$.
The Coxeter transformation $\Phi_Q = \Phi_A$ is the map on the Grothendieck group $\K_0(A)$ of $\perf(A)$
induced by the derived Auslander--Reiten translation $\boldsymbol{\tau}_A := \boldsymbol{\nu}_A[-1]$.
In particular, the Coxeter transformation of any fractionally Calabi--Yau algebra is periodic.
Other well-known examples of fractionally Calabi--Yau algebras are symmetric algebras and canonical algebras of tubular type. 
Further examples are some higher representation-finite algebras \cite{HI} \cite{Gr} \cite{Ha},
and incidence algebras of Tamari lattices \cite{Ro}.

Quivers of affine type can also be characterized in terms of their Coxeter matrices:
A quiver $Q$ is of affine type if and only if its Coxeter transformation $\Phi_Q$ is 
cyclotomic\footnote{We call $\Phi_Q$ cyclotomic if some non-zero power of $\Phi_Q$ is unipotent.} 
but not periodic.
More generally, de la Peña \cite{dlP} defines a triangular finite-dimensional algebra $A$ to be of \emph{cyclotomic type} if its Coxeter transformation $\Phi_A$ is cyclotomic.
Path algebras $A=KQ$ of affine quivers $Q$ are not fractionally Calabi--Yau but in some sense 
they are only one step away:
There exists an integer $n\geq 1$ and for every $V\in \perf(A)$ there exists a homomorphism $f\colon V \to \boldsymbol{\tau}_A^{-n}(V)$ such that $\boldsymbol{\tau}_A^{-n}(\Cone(f)) \cong \Cone(f)$; 
moreover, both the morphism $f$ and the isomorphism are natural.
In Section \ref{sec:serre_cyclotomic}, we formalize this in the notion of \emph{Serre cyclotomic algebras}.
It follows that the Coxeter transformation of any Serre cyclotomic algebra is cyclotomic.
In Section \ref{sec:canonical} we will see that higher canonical algebras are natural examples of Serre cyclotomic algebras.
Higher canonical algebras were introduced in \cite{HIMO},
they generalize Ringel's canonical algebras 
and are derived equivalent to Geigle--Lenzing complete intersections.
From this noncommutative geometry perspective, Serre cyclotomicity corresponds to the well-known fact that Geigle--Lenzing complete intersections are Fano, Calabi--Yau, or anti-Fano.
As a noncommutative analog of Fano varieties, Minamoto \cite{Mi} introduced the notion of Fano algebras. 
While path algebras of all quivers of infinite type are Fano algebras, those of indefinite type are not Serre cyclotomic.

In this note, we work more generally with proper differential graded algebras and differential graded bimodules.
In Section \ref{sec:cyclotomic_bimodules}, we introduce \emph{cyclotomic dg bimodules}. 
We relate this notion to Elias--Hogancamp's categorical eigentheory \cite{EH},
and study their categorical dynamics in the sense of \cite{DHKK}.
It follows from these considerations that graded path algebras and homologically smooth graded gentle algebras cannot be Serre cyclotomic unless they are already derived equivalent to path algebras of type $\mathbb{A}$ or $\tilde{\mathbb{A}}$ concentrated in degree $0$;
see Sections \ref{sec:graded_quiver} and \ref{sec:graded_gentle}. 
In the final Section \ref{sec:complexity}, 
we use Happel's equivalence to show that the trivial extension algebra $TA := A \ltimes DA$,
of any Serre cyclotomic algebra $A$ which is concentrated in degree $0$ and has finite global dimension,
has finite complexity, in the sense of Alperin \cite{Al}.


\subsection*{Setting}

We fix an algebraically closed field $K$.
For convenience, we abbreviate ``\emph{differential graded}'' by ``\emph{dg}'' in what follows.
Let $A$ be a proper dg $K$-algebra.
All our dg $A$-modules are left dg $A$-modules.
We write $\Der(A)$ for the derived category of dg $A$-modules, 
and $\perf(A)$ for the smallest thick subcategory of $\Der(A)$ containing $A$.
Throughout, we abbreviate $\Hom_A := \Hom_{\Der(A)}$.
Note that the opposite algebra $A^\op$
and its enveloping algebra $A^e := A \otimes_K A^\op$
are both proper again.
We canonically identify dg $A^e$-modules with dg $A$-bimodules.


\section{Cyclotomic bimodules} \label{sec:cyclotomic_bimodules}

Let $A$ be a proper dg algebra.
Every $M\in \Der(A^e)$ defines a pair of adjoint functors 
\begin{equation}\label{adjunction}
    \begin{tikzcd}
        M \lotimes_A ? \colon \Der(A) \ar[r,shift left = 1mm] & \Der(A) \,\colon\! \rHom_A(M,?) \ar[l,shift left = 1mm] \,.
    \end{tikzcd}
\end{equation}
Note that the left adjoint restricts to a well-defined functor $M\lotimes_A - \colon \perf(A) \to \perf(A)$
if and only if $M\in \perf(A)$ as a left dg $A$-module. 
Such dg $A$-bimodules form a thick subcategory which we denote by
\begin{align*}
    \lperf(A^e) &:= \{M\in \Der(A^e) \mid \text{$M \in \perf(A)$ as a left dg $A$-module}\} \subseteq \Der(A^e).
\end{align*}
For $M\in \lperf(A^e)$ and $\tilde{M} := \rHom_A(M,A) \in \Der(A^e)$, we have a natural isomorphism
\begin{align*}
    \rHom_{A}(M,-) \simeq \tilde{M}\lotimes_A -
\end{align*}
We call $M$ \emph{invertible} if there are quasi-isomorphisms $M\lotimes_A \tilde{M} \simeq A \simeq \tilde{M}\lotimes_A M$ in $\Der(A^e)$.
In this case $\tilde{M}\in \lperf(A^e)$ and (\ref{adjunction}) is an equivalence.
For $M\in\lperf(A^e)$ and $n\geq 1$, we abbreviate
\begin{align*}
    M^n := \underbrace{M \lotimes_A \cdots \lotimes_A M}_{\textup{$n$ times}} && \text{and} &&
    M^{-n} := \underbrace{\tilde{M} \lotimes_A \cdots \lotimes_A \tilde{M}}_{\textup{$n$ times}}\,.
\end{align*}

\begin{definition}\label{def:cyclotomic_bimodule}
    Let $l,m,n\in \bZ$ with $l\geq 1$ and $n\neq 0$.
    We call a dg bimodule $M\in \lperf(A^e)$ \emph{cyclotomic} of \emph{dimension} $(m,n)$ and \emph{order} at most $l$ 
    if $M$ is invertible and there exist homomorphisms $f_1,\dots,f_l\in\Hom_{A^e}(A,M^n[m])$ such that 
    \begin{equation*}
        \Cone(f_l) \lotimes_A \cdots \lotimes_A \Cone(f_1) \simeq 0 \in \lperf(A^e)\,.
    \end{equation*}
    We sometimes say that $M$ is \emph{$(l,m,n)$-cyclotomic}.
\end{definition}

Denote by $\K_0(A)$ the Grothendieck group of the triangulated category $\perf(A)$.
Every left perfect dg $A$-bimodule $M\in \lperf(A^e)$ induces a homomorphism 
\begin{align*}
    \Psi_M \colon \K_0(A) \rightarrow \K_0(A)\,, \hspace{1cm} [X] \mapsto [M\lotimes_A X]\,.
\end{align*}
As a direct consequence of the definition we obtain that cyclotomic dg bimodules induce cyclotomic maps on Grothendieck groups.

\begin{proposition}\label{prop:induced_map_cyclotomic}
    Let $l,m,n\in \bZ$ with $l\geq 1$ and $n\neq 0$.
    Suppose $M\in\lperf(A^e)$ is $(l,m,n)$-cyclotomic.
    Then, the induced map $\Psi_M$ is cyclotomic with 
    \begin{align*}
        (\Psi_M^{2n} - \bm{1})^l = 0\,.
    \end{align*}
\end{proposition}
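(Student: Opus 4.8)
The plan is to translate the vanishing condition on bimodules into an identity of operators on $\K_0(A)$ via the assignment $N \mapsto \Psi_N$, and then perform an elementary polynomial manipulation in $\Psi_M$. First I would record the functorial properties of $\Psi$ that I need. Since $\lotimes_A$ is associative up to coherent isomorphism, $\Psi_{N \lotimes_A N'} = \Psi_N \circ \Psi_{N'}$ for all $N, N' \in \lperf(A^e)$, and $\Psi_A = \bm{1}$. Shifting a bimodule negates classes in the Grothendieck group, i.e. $\Psi_{N[1]} = -\Psi_N$, since $N[1] \lotimes_A X \simeq (N \lotimes_A X)[1]$; combined with invertibility of $M$ — which guarantees $M^n \in \lperf(A^e)$ and $\Psi_{M^n} = \Psi_M^n$ — this gives $\Psi_{M^n[m]} = (-1)^m \Psi_M^n$. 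Finally, $\Psi$ is additive on distinguished triangles all of whose three terms lie in $\lperf(A^e)$: tensoring such a triangle over $A$ with a fixed $X \in \perf(A)$ produces a distinguished triangle in $\perf(A)$, so the middle term's class is the sum of the outer terms' classes.

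Next I would compute the operator attached to each cone. Each $f_i \in \Hom_{A^e}(A, M^n[m])$ sits in a distinguished triangle $A \xrightarrow{f_i} M^n[m] \to \Cone(f_i) \to A[1]$ in $\Der(A^e)$, and all three objects lie in $\lperf(A^e)$, the cone because $\lperf(A^e)$ is a thick subcategory. Additivity then yields $\Psi_{\Cone(f_i)} = \Psi_{M^n[m]} - \Psi_A = (-1)^m \Psi_M^n - \bm{1}$, the same operator for every $i$. Applying multiplicativity of $\Psi$ to the hypothesis $\Cone(f_l) \lotimes_A \cdots \lotimes_A \Cone(f_1) \simeq 0$, together with the fact that the functor attached to the zero bimodule is zero, gives $\Psi_{\Cone(f_l)} \circ \cdots \circ \Psi_{\Cone(f_1)} = 0$ on $\K_0(A)$, that is, $\bigl((-1)^m \Psi_M^n - \bm{1}\bigr)^l = 0$.

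To reach the stated form I would multiply this identity by $\bigl((-1)^m \Psi_M^n + \bm{1}\bigr)^l$; since both factors are polynomials in $\Psi_M$ they commute, and using $((-1)^m)^2 = 1$ one obtains $\bigl(\Psi_M^{2n} - \bm{1}\bigr)^l = 0$, so in particular $\Psi_M^{2n}$ is unipotent and $\Psi_M$ is cyclotomic. I do not expect a genuine obstacle here; the only points demanding care are the compatibility of $\Cone$ with $\lotimes_A$ and the verification that every intermediate bimodule indeed lies in $\lperf(A^e)$, so that $\Psi$ is defined on it and the multiplicativity and additivity statements apply — once this bookkeeping is settled, the argument is the one-line polynomial computation above.
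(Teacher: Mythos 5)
Your proof is correct and follows essentially the same route as the paper: the paper's proof also just records the identities $\Psi_0=0$, $\Psi_A=\bm{1}$, $\Psi_{N[1]}=-\Psi_N$, $\Psi_{N\lotimes_A N'}=\Psi_N\Psi_{N'}$, $\Psi_{\Cone(f)}=\Psi_N-\Psi_M$ and then applies them to the defining relation, which yields $\bigl((-1)^m\Psi_M^n-\bm{1}\bigr)^l=0$ and hence $\bigl(\Psi_M^{2n}-\bm{1}\bigr)^l=0$ exactly as in your final squaring step. The only difference is that you spell out the bookkeeping (thickness of $\lperf(A^e)$, the sign $(-1)^m$, multiplying by $\bigl((-1)^m\Psi_M^n+\bm{1}\bigr)^l$) which the paper leaves implicit.
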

\begin{proof}
    For $M,N\in\lperf(A^e)$, $L:=M\lotimes_A N$ and every $f\in \Hom_{A^e}(M,N)$ we have 
\begin{align*}
    \Psi_0 = 0, &&
    \Psi_A = \bm{1}, &&
    \Psi_{M[1]} = -\Psi_M, &&
    \Psi_{L} = \Psi_M \Psi_N, && \text{and} &&
    \Psi_{\Cone(f)} = \Psi_N - \Psi_M.
\end{align*}
    With these identities, the claim follows immediately from Definition \ref{def:cyclotomic_bimodule}.
\end{proof}

\subsection{Categorical eigentheory} \label{sec:categorical_eigentheory}

Elias--Hogancamp developed in \cite{EH} their theory of categorical diagonalization.
In this section, we generalize their definition of eigencategories to generalized eigencategories.
Note that our definition is different from their suggestion in \cite[Section 9.4]{EH}.

\begin{definition}\label{def:generalized_eigencategory}
    Let $M,N\in \lperf(A^e)$, $l\geq 1$ and ${\bf f} \in \Hom_{A^e}(M,N)^l$.
    For convenience, we abbreviate
    \begin{align*}
        \Cone({\bf f}) := \Cone(f_l) \lotimes_A \cdots \lotimes_A \Cone(f_1) \in \lperf(A^e).
    \end{align*}
    We call the following full subcategories of $\perf(A)$ \emph{generalized eigencategories}:
    \begin{align*}
        \mathcal{E}^{\bf f}(M,N) := \{X\in\perf(A) \mid \text{$\Cone({\bf f}) \lotimes_A X \simeq 0$}\} \subseteq \perf(A),
    \end{align*}
    \begin{align*}
        \mathcal{E}^l(M,N) := \bigcup_{{\bf f}\in \Hom_{A^e}(M,N)^l} \mathcal{E}^{\bf f}(M,N) \subseteq \perf(A).
    \end{align*}
\end{definition}

For $m\in \bZ$, $M=A[m]$, $l=1$ and $f\in \Hom_{A^e}(A[m],N)$, the category $\mathcal{E}^f(A[m],N)$ is the \emph{eigencategory} of $N$ with \emph{eigenvalue} $m$ and \emph{eigenmap} $f$ from \cite{EH}.
Cyclotomic dg bimodules can be characterized in terms of generalized eigencategories as follows.

\begin{lemma}\label{lem:cyclotomic_generalized_eigencategory}
    Let $l,m,n\in \mathbb{Z}$ with $l\geq 1$ and $n\neq 0$.
    The following are equivalent for an invertible $M\in \lperf(A^e)$:
    \begin{enumerate}[label = (\roman*)]
        \item $M$ is $(l,m,n)$-cyclotomic.
        \item $\mathcal{E}^l(A,M^n[m]) = \perf(A)$.
    \end{enumerate}
\end{lemma}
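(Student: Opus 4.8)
The plan is to prove the equivalence directly from the definitions, unwinding what it means for the tensor functor $M \lotimes_A -$ to satisfy the cyclotomicity relation.

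\medskip

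For the implication (i)$\Rightarrow$(ii): assume $M$ is $(l,m,n)$-cyclotomic, so there are $f_1,\dots,f_l \in \Hom_{A^e}(A, M^n[m])$ with $\Cone(f_l) \lotimes_A \cdots \lotimes_A \Cone(f_1) \simeq 0$ in $\lperf(A^e)$. Setting $\mathbf{f} := (f_1,\dots,f_l)$, the abbreviation of Definition \ref{def:generalized_eigencategory} gives $\Cone(\mathbf{f}) \simeq 0$. Then for every $X \in \perf(A)$ we have $\Cone(\mathbf{f}) \lotimes_A X \simeq 0 \lotimes_A X \simeq 0$, so $X \in \mathcal{E}^{\mathbf{f}}(A, M^n[m]) \subseteq \mathcal{E}^l(A, M^n[m])$. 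Hence $\mathcal{E}^l(A, M^n[m]) = \perf(A)$.

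\medskip

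For the converse (ii)$\Rightarrow$(i): the hypothesis gives, for each $X \in \perf(A)$, some $\mathbf{f}_X \in \Hom_{A^e}(A, M^n[m])^l$ with $\Cone(\mathbf{f}_X) \lotimes_A X \simeq 0$. The point is that a single choice works for all $X$ simultaneously, and the natural candidate is $X = A$ itself, since $A$ generates $\perf(A)$ as a thick subcategory. So pick $\mathbf{f} := \mathbf{f}_A$, so that $\Cone(\mathbf{f}) \lotimes_A A \simeq 0$ in $\Der(A)$. Now $\Cone(\mathbf{f}) \in \lperf(A^e)$ and $\Cone(\mathbf{f}) \lotimes_A A \simeq \Cone(\mathbf{f})$ as a dg bimodule (the right action being unaffected), so in fact $\Cone(\mathbf{f}) \simeq 0$ in $\Der(A^e)$ — here one uses that a left perfect bimodule vanishing as a left module already vanishes as a bimodule, which is immediate since the forgetful functor $\Der(A^e) \to \Der(A)$ detects zero objects. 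Unwinding the abbreviation, $\Cone(f_l) \lotimes_A \cdots \lotimes_A \Cone(f_1) \simeq 0$, which together with invertibility of $M$ (assumed throughout the lemma) says precisely that $M$ is $(l,m,n)$-cyclotomic.

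\medskip

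I do not expect any serious obstacle here: the statement is essentially a reformulation, and the only mildly substantive point is the reduction from ``vanishing after tensoring with every $X \in \perf(A)$'' to ``vanishing after tensoring with $A$,'' which follows because $\Cone(\mathbf{f}) \lotimes_A -$ is an exact functor commuting with coproducts (or simply because $\perf(A)$ is the thick closure of $A$), so its vanishing on $A$ forces vanishing on all of $\perf(A)$; and then the identification of $\Cone(\mathbf{f}) \lotimes_A A$ with $\Cone(\mathbf{f})$ in $\Der(A^e)$. If one prefers to avoid even this, one can observe directly that $\Cone(\mathbf{f}) \lotimes_A A \simeq \Cone(\mathbf{f})$ canonically, so the condition $A \in \mathcal{E}^{\mathbf{f}}(A, M^n[m])$ is literally the same as $\Cone(\mathbf{f}) \simeq 0$.
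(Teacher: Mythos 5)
Your proposal is correct and takes the same (purely definitional) route as the paper, which simply declares the lemma immediate from Definitions \ref{def:cyclotomic_bimodule} and \ref{def:generalized_eigencategory}; your unwinding — (i)$\Rightarrow$(ii) since $\Cone(\mathbf{f})\simeq 0$ forces $\Cone(\mathbf{f})\lotimes_A X\simeq 0$ for every $X$, and (ii)$\Rightarrow$(i) by testing on $X=A$ and using $\Cone(\mathbf{f})\lotimes_A A\simeq \Cone(\mathbf{f})$ together with the fact that acyclicity (vanishing in the derived category) is detected on the underlying complex — is exactly the intended argument. The musings about thick closures and coproducts are unnecessary for the implication you need, but your final observation that $A\in\mathcal{E}^{\mathbf{f}}(A,M^n[m])$ is literally the condition $\Cone(\mathbf{f})\simeq 0$ is the cleanest and fully sufficient formulation.
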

\begin{proof}
    This is immediate from Definitions \ref{def:cyclotomic_bimodule} and \ref{def:generalized_eigencategory}.
\end{proof}

In particular, if a dg bimodule $M$ is $(l,m,n)$-cyclotomic,
then $M^n$ is prediagonalizable in the sense of \cite{EH}.
The next two lemmas are sometimes useful for showing that a dg bimodule is Serre cyclotomic.

\begin{lemma}\label{lem:generalized_eigencategory_thick}
    Let $M,N\in\lperf(A^e)$ and $l\geq 1$.
    The full subcategory $\mathcal{E}^l(M,N)$ of $\perf(A)$ is a thick subcategory.
\end{lemma}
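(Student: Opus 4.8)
The plan is to verify the three defining closure properties of a thick subcategory for $\mathcal{E}^l(M,N)$: closure under shifts, under direct summands (idempotent completeness relative to $\perf(A)$), and under cones (the two-out-of-three property for triangles). The common mechanism is that the functor $\Cone({\bf f}) \lotimes_A - \colon \perf(A) \to \perf(A)$ is exact (triangulated) and additive, so the full subcategory of objects it sends to $0$ — which for a \emph{fixed} $\bf f$ is exactly $\mathcal{E}^{\bf f}(M,N)$ — is automatically a thick subcategory, being the kernel of an exact functor. The only real content is that $\mathcal{E}^l(M,N)$, which is a \emph{union} of such kernels over all ${\bf f}\in\Hom_{A^e}(M,N)^l$, remains thick; a union of thick subcategories need not be thick in general, so one must use the additive structure of the indexing set $\Hom_{A^e}(M,N)^l$.

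First I would record that for each fixed $\bf f$, the subcategory $\mathcal{E}^{\bf f}(M,N)=\ker\bigl(\Cone({\bf f})\lotimes_A -\bigr)$ is thick: shifts are handled since $\Cone({\bf f})\lotimes_A X[1]\simeq (\Cone({\bf f})\lotimes_A X)[1]$; cones by the octahedral/triangulated-functor axiom, since if two of $X\to Y\to Z\to X[1]$ lie in the kernel so does the third; and summands since $\Cone({\bf f})\lotimes_A(X\oplus X')\simeq (\Cone({\bf f})\lotimes_A X)\oplus(\Cone({\bf f})\lotimes_A X')$ vanishes iff both summands do. Then for the union: given $X\in\mathcal{E}^l(M,N)$, so $X\in\mathcal{E}^{\bf f}(M,N)$ for some $\bf f$, the shift $X[k]$ and any summand of $X$ lie in the \emph{same} $\mathcal{E}^{\bf f}(M,N)$, hence in $\mathcal{E}^l(M,N)$ — so closure under shifts and summands is immediate without leaving the fixed fibre.

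The step requiring care is closure under cones. Suppose $X\to Y\to Z\to X[1]$ is a triangle in $\perf(A)$ with $X\in\mathcal{E}^{\bf f}(M,N)$ and $Y\in\mathcal{E}^{\bf g}(M,N)$ for (possibly different) tuples ${\bf f}=(f_1,\dots,f_l)$ and ${\bf g}=(g_1,\dots,g_l)$; I want to produce a single tuple $\bf h$ with $Z\in\mathcal{E}^{\bf h}(M,N)$. The idea is to take $\bf h$ to be a ``sum'' or ``composite'' witnessing tuple built from $\bf f$ and $\bf g$: because $\Hom_{A^e}(M,N)$ is an abelian group, one can consider $h_i := f_i + g_i$ (or, if a composite is more natural given how the $\Cone(h_i)$ multiply, an appropriate iterated tuple of length $2l$ and then absorb the discrepancy in $l$ — but the statement fixes the length $l$, so the sum is the right move). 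One then shows $\Cone({\bf h})\lotimes_A -$ annihilates both the kernel of $\Cone({\bf f})\lotimes_A -$ and that of $\Cone({\bf g})\lotimes_A -$; concretely, there should be, for each $i$, a triangle or filtration relating $\Cone(f_i+g_i)$ to $\Cone(f_i)$ and $\Cone(g_i)$ (an octahedron applied to $f_i, g_i, f_i+g_i$ after suitable identifications), so that $\Cone({\bf h})\lotimes_A X$ is built from copies of $\Cone({\bf f})\lotimes_A X\simeq 0$ and $\Cone({\bf h})\lotimes_A Y$ from copies of $\Cone({\bf g})\lotimes_A Y\simeq 0$, whence both vanish, and then $\Cone({\bf h})\lotimes_A Z\simeq 0$ follows from the triangle. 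I expect this identification of $\Cone(f_i+g_i)$ in terms of $\Cone(f_i)$ and $\Cone(g_i)$ — making precise the sense in which eigenmaps can be added — to be the main obstacle; everything else is formal manipulation of exact functors and triangles.
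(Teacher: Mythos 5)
Your reduction of the problem is exactly right and matches the paper: each fixed-tuple category $\mathcal{E}^{\bf f}(M,N)$ is thick as the kernel of an exact functor, so shifts and summands stay in the same fibre, and the only real issue is closure under cones, which amounts to finding a \emph{single} tuple witnessing both $X$ and $Y$ simultaneously. However, the mechanism you propose for producing that common witness — taking $h_i := f_i + g_i$ and relating $\Cone(f_i+g_i)$ to $\Cone(f_i)$ and $\Cone(g_i)$ by an octahedron — does not work, and this is precisely the step you flagged as the main obstacle. The octahedral axiom controls cones of \emph{composites}, not of sums: there is in general no filtration of $\Cone(f+g)$ by $\Cone(f)$ and $\Cone(g)$. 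A minimal counterexample: take $l=1$, $M=N=A$, $f=\mathrm{id}$, $g=-\mathrm{id}$. Then $\Cone(f)\simeq 0$, so every $X$ lies in $\mathcal{E}^{f}(A,A)$, yet $\Cone(f+g)=\Cone(0)\simeq A\oplus A[1]$, and $\Cone(f+g)\lotimes_A X \simeq X\oplus X[1]$ is nonzero for $X\neq 0$. So the sum of two witnessing tuples need not annihilate anything, and no formal triangulated-category identity will repair this.

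The paper closes the gap by a genericity argument rather than an algebraic construction: for fixed $X\in\perf(A)$, the set $\mathcal{U}(X)\subseteq \Hom_{A^e}(M,N)^l$ of tuples $\bf f$ with $\Cone({\bf f})\lotimes_A X\simeq 0$ is Zariski open (the condition unwinds to a certain induced map being a quasi-isomorphism, which is an open condition on the linear parameter), and it is nonempty whenever $X\in\mathcal{E}^l(M,N)$. Since $K$ is infinite, the affine space $\Hom_{A^e}(M,N)^l$ is irreducible, so the two nonempty opens $\mathcal{U}(X)$ and $\mathcal{U}(Y)$ intersect; any $\bf f$ in the intersection puts $X$ and $Y$ (hence the cone) into the single thick subcategory $\mathcal{E}^{\bf f}(M,N)$. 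In short: the common witness is a \emph{generic} tuple, not the sum of the given ones, and the linear/topological structure of $\Hom_{A^e}(M,N)^l$ over an infinite field is the essential input your argument is missing.
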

\begin{proof}
    For each ${\bf f}\in \Hom_{A^e}(M,N)^l$, it is clear that $\mathcal{E}^{\bf f}(M,N)\subseteq \perf(A)$ is a thick subcategory. 
    It follows immediately that $\mathcal{E}^l(M,N)$ is closed under direct summands and shifts,
    and for cones it thus suffices to show: For any two $X,Y\in \mathcal{E}^l(M,N)$ there exists some ${\bf f}\in \Hom_{A^e}(M,N)^l$ such that $X,Y\in \mathcal{E}^{\bf f}(M,N)$.
    
    The subset 
    \begin{align*}
        \mathcal{U}(X) := \{{\bf f} \mid \text{$\Cone({\bf f}) \lotimes_A X \simeq 0$ in $\perf(A)$}\}\subseteq \Hom_{A^e}(M,N)^l
    \end{align*}
    is Zariski open.
    Indeed,
    consider
    \begin{align*}
        X' := \Cone(f_{l-1})\lotimes_A \dots \lotimes_A \Cone(f_1) \lotimes_A X \in \perf(A),
    \end{align*}
    the linear map 
    \begin{align*}
        F:= - \lotimes_A X'\colon \Hom_{A^e}(M,N) \xrightarrow{} \Hom_{A}(M\lotimes_A X', N\lotimes_A X')
    \end{align*}
    and the Zariski open subset
    \begin{align*}
        \mathcal{U'} := \{h \mid \text{$h$ is a quasi-isomorphism}\} \subseteq \Hom_{A}(M\lotimes_A X', N\lotimes_A X').
    \end{align*}
    It follows that $\mathcal{U}(X) = F^{-1}(\mathcal{U}')$ is Zariski open in $\Hom_{A^e}(M,N)^l$.
    Since $K$ is infinite by our standing assumption,
    the affine space $\Hom_{A^e}(M,N)^l$ is irreducible.
    Thus,
    for any two $X,Y\in \mathcal{E}^l(M,N)$,
    the intersection $\mathcal{U}(X) \cap \mathcal{U}(Y)$ is non-empty.
    For any ${\bf f}$ in that intersection, 
    we have $X,Y \in \mathcal{E}^{\bf f}(M,N)$.
\end{proof}

\begin{lemma}\label{generalized_eigencategory_invariant}
    Let $M\in \lperf(A^e)$ be invertible and $l\geq 1$.
    The full subcategory $\mathcal{E}^l(A,M)$ is closed under the functors $M\lotimes_A -$ and $\tilde{M}\lotimes_A-$.
\end{lemma}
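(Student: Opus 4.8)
The plan is to show that $X \in \mathcal{E}^l(A,M)$ implies $M\lotimes_A X \in \mathcal{E}^l(A,M)$ and $\tilde M\lotimes_A X\in\mathcal{E}^l(A,M)$, by transporting an eigenmap for $X$ to an eigenmap for $M\lotimes_A X$ using invertibility of $M$. First I would fix $X\in\mathcal{E}^l(A,M)$ and choose ${\bf f}=(f_1,\dots,f_l)\in\Hom_{A^e}(A,M)^l$ with $\Cone({\bf f})\lotimes_A X\simeq 0$. The key observation is that for any $f\in\Hom_{A^e}(A,M)$ there is a "shifted" version $f'\in\Hom_{A^e}(A,M)$ obtained by tensoring with $M$ and identifying $M\lotimes_A A\simeq M$ and $M\lotimes_A M\simeq M^2$; more precisely, since $M$ is invertible, the functor $M\lotimes_A-\colon\Der(A^e)\to\Der(A^e)$ is an autoequivalence, so it induces a bijection $\Hom_{A^e}(A,M)\xrightarrow{\sim}\Hom_{A^e}(M\lotimes_A A,\, M\lotimes_A M)=\Hom_{A^e}(M,M^2)$. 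What I actually want is to build new $f_i'\in\Hom_{A^e}(A,M)$ so that $\Cone({\bf f}')\lotimes_A(M\lotimes_A X)\simeq 0$; this should follow because $\Cone(f)$ commutes with the autoequivalence $M\lotimes_A-$ up to the canonical identifications, so $\Cone({\bf f})\lotimes_A X\simeq 0$ is equivalent to $\bigl(M\lotimes_A\Cone({\bf f})\lotimes_A\tilde M\bigr)\lotimes_A(M\lotimes_A X)\simeq 0$.

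Concretely, the cleanest route is: conjugation by the invertible bimodule $M$, i.e. the operation $N\mapsto M\lotimes_A N\lotimes_A \tilde M$, is a monoidal autoequivalence of $\lperf(A^e)$ fixing the unit $A$, hence it sends $A$ to $A$, sends $\Cone(f_i)$ to $\Cone(g_i)$ for $g_i$ the image of $f_i$ under this autoequivalence (which lies again in $\Hom_{A^e}(A,M)$ since the autoequivalence fixes both $A$ and — because $M\lotimes_A M\lotimes_A\tilde M\simeq M$ — the bimodule $M$), and is compatible with $\lotimes_A$. Therefore $\Cone({\bf g}) \simeq M\lotimes_A\Cone({\bf f})\lotimes_A\tilde M$, and applying $-\lotimes_A(M\lotimes_A X)$ gives $\Cone({\bf g})\lotimes_A M\lotimes_A X\simeq M\lotimes_A\Cone({\bf f})\lotimes_A\tilde M\lotimes_A M\lotimes_A X\simeq M\lotimes_A\Cone({\bf f})\lotimes_A X\simeq M\lotimes_A 0\simeq 0$, so $M\lotimes_A X\in\mathcal{E}^{\bf g}(A,M)\subseteq\mathcal{E}^l(A,M)$. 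The argument for $\tilde M\lotimes_A-$ is symmetric, conjugating instead by $\tilde M$ (also invertible with inverse $M$), using $\tilde M\lotimes_A M\lotimes_A M\simeq M$ to see that the eigenmaps again land in $\Hom_{A^e}(A,M)$.

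The main obstacle I anticipate is purely bookkeeping: carefully pinning down that the conjugation autoequivalence $N\mapsto M\lotimes_A N\lotimes_A\tilde M$ actually fixes $M$ (not merely some shift of it) and is strictly enough monoidal that $\Cone$ and $\lotimes_A$ pass through it, so that the transported tuple ${\bf g}$ genuinely lies in $\Hom_{A^e}(A,M)^l$ and $\Cone({\bf g})$ is the $M$-conjugate of $\Cone({\bf f})$. These are all consequences of the invertibility quasi-isomorphisms $M\lotimes_A\tilde M\simeq A\simeq\tilde M\lotimes_A M$ together with associativity of $\lotimes_A$ and the triangulated-functor property of an autoequivalence; no genuinely hard input is needed, so the write-up can afford to be brief and invoke these formal properties rather than spelling out the coherences.
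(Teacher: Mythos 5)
Your proposal is correct and is essentially the paper's own argument: the paper likewise transports each $f_i$ to $f_i'$ via conjugation $N\mapsto M\lotimes_A N\lotimes_A\tilde M$ (using $M\lotimes_A A\lotimes_A\tilde M\simeq A$ and $M\lotimes_A M\lotimes_A\tilde M\simeq M$, so that $\Cone(f_i')\simeq M\lotimes_A\Cone(f_i)\lotimes_A\tilde M$), then telescopes using $\tilde M\lotimes_A M\simeq A$ to get $\Cone({\bf f}')\lotimes_A(M\lotimes_A X)\simeq M\lotimes_A\Cone({\bf f})\lotimes_A X\simeq 0$, with the symmetric argument for $\tilde M\lotimes_A-$. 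No substantive difference in approach.
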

\begin{proof}
    Suppose $X\in\mathcal{E}^l(A,M)$.
    Set $X' := M\lotimes_A V$.
    We want to show $X' \in \mathcal{E}^l(A,M)$. 
    Let ${\bf f}\in \Hom_{A^e}(A,M)^l$
    with $\Cone({\bf f}) \lotimes_A X \simeq 0$.
    Define another sequence of morphisms
    ${\bf f'} = (f'_l,\dots,f'_1)\in \Hom_{A^e}(A,M)^l$ 
    by the following commutative diagram with triangle rows and vertical quasi-isomorphisms
    for each $1\leq i \leq l$:
    \[
    \begin{tikzcd}[sep=.8cm]
        A \ar[rr,"f_i'"] \ar[d,"\sim"] && M \ar[r] \ar[d,"\sim"] & \Cone(f_i') \ar[r] \ar[d,dashed,"\sim"] & A[1] \ar[d,"\sim"] \\
        M \lotimes_A A \lotimes_A \tilde{M} \ar[rr,"M \lotimes f_i \lotimes \tilde{M}"] && M \lotimes_A M \lotimes_A \tilde{M} \ar[r] & M \lotimes_A \Cone(f_i) \lotimes_A \tilde{M} \ar[r] &  ...[1]
    \end{tikzcd}
    \]
    Since $M$ is invertible, 
    we have $\tilde{M} \lotimes_A M \simeq A$ in $\lperf(A^e)$. 
    Thus, we find
    \begin{align*}
        \Cone({\bf f'}) &\lotimes_A (M\lotimes_A X) \simeq \\
        &\simeq M \lotimes_A \Cone(f_l) \lotimes_A \tilde{M} \lotimes_A \dots 
        \lotimes_A M \lotimes_A \Cone(f_1) \lotimes_A \tilde{M} \lotimes_A M \lotimes_A X \\
        &\simeq M \lotimes_A \Cone({\bf f}) \lotimes_A X \simeq 0.
    \end{align*}
    This shows $X'\in \mathcal{E}^l(A,M)$.
    Similarly, we see that $\mathcal{E}^l(A,M)$ is closed under $\tilde{M} \lotimes_A -$.
\end{proof}

\subsection{Categorical entropy}\label{sec:categorical_entropy}

Let $\mathcal{T}$ be a triangulated category.
Recall that $T\in\mathcal{T}$ is a generator
if for every $X\in\mathcal{T}$ there exists a sequence of triangles
\begin{equation}\label{eq:generator_sequence}
    \begin{tikzcd}[column sep = .1cm, row sep = .5cm]
        0 \ar[rr] && 
        X_1 \ar[rr] \ar[dl] && 
        X_2 \ar[rr] \ar[dl] && 
        \cdots \ar[rr] \ar[dl] && 
        X_{l-1} \ar[rr] \ar[dl] && 
        X \oplus X' \ar[dl] \\ &
        T[n_1] \ar[dotted,ul] && 
        T[n_2] \ar[dotted,ul] && 
        T[n_3] \ar[dotted,ul] & 
        \cdots & 
        T[n_{l-1}] \ar[dotted,ul] && 
        T[n_l] \ar[dotted,ul]
    \end{tikzcd}
\end{equation}
with $n_1,\dots,n_l\in\bZ$ and some $X'\in \mathcal{T}$.
In \cite{DHKK} the \emph{complexity} of an object $X\in\mathcal{T}$ at $t\in \mathbb{R}$
with respect to a generator $T\in\mathcal{T}$ is defined as
\begin{equation*}
    \delta_t(T,X) := \inf\left\{\sum_{i = 1}^l e^{n_i t} \mid \text{(\ref{eq:generator_sequence}) for $X$ exists}\right\} \in \mathbb{R}_{\geq 0}
\end{equation*}

Given an exact endofunctor $F\colon \mathcal{T} \to \mathcal{T}$ they define in \cite{DHKK} its \emph{entropy} at $t\in \mathbb{R}$ as
\begin{equation*}
    h_t(F) := \lim_{N\to \infty} \frac{1}{N} \log \delta_t(T,F^NT) \in \mathbb{R}\cup\{-\infty\},
\end{equation*}
which is well-defined and independent of the chosen generator $T\in\mathcal{T}$.
Moreover, Fan--Fu--Ouchi \cite{FFO} define the \emph{(lower) polynomial entropy} of $F$ as 
\begin{align*}
    h^{\textup{pol}}(F) := \liminf_{N\to\infty} \frac{\log(\delta_0(T,F^NT))-N\cdot h_0(F)}{\log(N)}\,.
\end{align*}

\begin{lemma}\label{lem:entropy_1}
    Let $\mathcal{T}$ be a triangulated category and $F\colon\mathcal{T} \to \mathcal{T}$ an exact endofunctor.
    Suppose there exist $X_0,\dots,X_l \in \mathcal{T}$ and for each $0\leq i < l$ a triangle
    \begin{equation}\label{eq:functor_filtration}
        \begin{tikzcd}[column sep = .25cm]
            X_i \ar[r] & FX_i \ar[r] & X_{i+1} \ar[r] & X_i[1] 
        \end{tikzcd}
    \end{equation}
    such that $X_0 = T$ and $X_l = 0$.
    Then $h_t(F) \leq 0$ for all $t\in\mathbb{R}$.
    If moreover $h_0(F) = 0$, then $h^{\textup{pol}}(F) \leq l-1$.
\end{lemma}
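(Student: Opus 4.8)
The plan is to bound the complexity $\delta_t(T,F^NT)$ from above by a polynomial in $N$. Since $\tfrac1N\log(\text{polynomial in }N)\to 0$, this immediately gives $h_t(F)=\lim_N\tfrac1N\log\delta_t(T,F^NT)\le 0$, and the degree of the polynomial will control $h^{\textup{pol}}(F)$.

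First I would collect the standard facts from \cite{DHKK}: for any triangle $A\to B\to C\to A[1]$ one has subadditivity $\delta_t(T,B)\le\delta_t(T,A)+\delta_t(T,C)$; since $F$ is exact, applying $F^k$ to the triangle (\ref{eq:functor_filtration}) yields a triangle $F^kX_i\to F^{k+1}X_i\to F^kX_{i+1}\to F^kX_i[1]$; since $F$ is additive, $F^kX_l\simeq 0$ and hence $\delta_t(T,F^kX_l)=0$; and since $T$ is a generator, the sequence (\ref{eq:generator_sequence}) shows $\delta_t(T,Y)<\infty$ for every $Y\in\mathcal{T}$, so in particular $C:=\max_{0\le i\le l}\delta_t(T,X_i)<\infty$.

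Combining subadditivity with the functorial triangles gives, for all $k\ge 0$ and $0\le i<l$,
\begin{equation*}
    \delta_t(T,F^{k+1}X_i)\ \le\ \delta_t(T,F^kX_i)+\delta_t(T,F^kX_{i+1}),
\end{equation*}
with boundary data $\delta_t(T,F^kX_l)=0$ and $\delta_t(T,X_i)\le C$. A downward induction on $i$ then shows that $k\mapsto\delta_t(T,F^kX_{l-1-j})$ is bounded above by a polynomial in $k$ of degree $j$: for $j=0$ the recursion gives $\delta_t(T,F^{k+1}X_{l-1})\le\delta_t(T,F^kX_{l-1})$, hence $\delta_t(T,F^kX_{l-1})\le C$ for all $k$; and if $\delta_t(T,F^kX_{l-j})\le q(k)$ with $\deg q=j-1$, then telescoping the recursion for $i=l-1-j$ gives $\delta_t(T,F^kX_{l-1-j})\le C+\sum_{m=0}^{k-1}q(m)$, a polynomial of degree $j$. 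Taking $j=l-1$ yields $\delta_t(T,F^NT)=\delta_t(T,F^NX_0)\le C\,p(N)$ with $\deg p=l-1$; one can be explicit and take $p(N)=\binom{N+l-1}{l-1}$.

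It remains to conclude. For the first assertion, $\tfrac1N\log\delta_t(T,F^NT)\le\tfrac1N\bigl(\log C+\log p(N)\bigr)\to 0$, so $h_t(F)\le 0$ for all $t\in\mathbb{R}$. For the second, when $h_0(F)=0$ one has $h^{\textup{pol}}(F)=\liminf_N\tfrac{\log\delta_0(T,F^NT)}{\log N}$, and $\log\delta_0(T,F^NT)\le\log C+\log p(N)=(l-1)\log N+O(1)$ gives $h^{\textup{pol}}(F)\le l-1$. I do not expect a genuine obstacle; the only points needing care are that $T$ being a generator is precisely what makes every $\delta_t(T,X_i)$ finite, and the bookkeeping that converts ``a length-$l$ functorial filtration with vanishing ends'' into a degree-$(l-1)$ polynomial bound on complexity.
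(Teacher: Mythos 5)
Your proof is correct and follows essentially the same route as the paper: apply powers of $F$ to the given triangles, use the subadditivity of $\delta_t$ over triangles from \cite{DHKK}, and induct to get a degree-$(l-1)$ polynomial bound on $\delta_t(T,F^NT)$, from which both claims follow. The only differences are cosmetic bookkeeping (your bound $C\binom{N+l-1}{l-1}$ versus the paper's $\sum_{k=0}^{l-1}\frac{N!}{(N-k)!}\delta_t(T,X_k)$) and your explicit remark that the generator property guarantees finiteness of the $\delta_t(T,X_i)$, which the paper leaves implicit.
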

\begin{proof}
    Fix $t\in \mathbb{R}$.
    For any $N\geq 1$ and $0\leq i < l$, 
    apply $F^{N-1}$ to the triangle (\ref{eq:functor_filtration}) to obtain the triangle
    \begin{equation*}
        \begin{tikzcd}[column sep = .25cm]
            F^{N-1}X_i \ar[r] & F^{N}X_i \ar[r] & F^{N-1}X_{i+1} \ar[r] & F^{N-1} X_i[1]\,.
        \end{tikzcd}
    \end{equation*}
    From \cite[Proposition 2.3]{DHKK} we get 
    \begin{align*}
        \delta_t(T, F^N X_i) \leq \delta_t(T,F^{N-1}X_i) + \delta_t(F^{N-1}X_{i+1})
    \end{align*}
    For $N\geq l$,
    we find inductively that
    \begin{align*}
        \delta_t(T, F^N T) \leq \sum_{k = 0}^{l-1} \frac{N!}{(N-k)!} \delta_t(T, X_{k})
    \end{align*}
    This upper bound is a polynomial of degree $l-1$ in $N$.
    Therefore,
    \begin{align*}
        h_t(F) \leq \lim_{N\to\infty} \frac{1}{N}\log\left(\sum_{k = 0}^{l-1} \frac{N!}{(N-k)!} \delta_t(T, X_{k})\right) = 0\,,
    \end{align*}
    and, if $h_0(F)=0$, then
    \begin{align*}
        h^{\textup{pol}}(F) = \liminf_{N\to \infty} \log_N(\delta_0(T,F^N T)) \leq l-1\,.
    \end{align*}
\end{proof}

\begin{lemma}\label{lem:entropy_2}
    Let $M\in \lperf(A^e)$ be $(l,0,1)$-cyclotomic for some $l\geq 0$.
    \begin{enumerate}[label = (\roman*)]
    \item There exist $X_0,\dots,X_l \in \lperf(A^e)$ and for each $0\leq i < l$ a triangle
    \begin{align*}
        \eta_i\colon \hspace{1cm}
        X_i \xrightarrow{} M\lotimes_A X_i \xrightarrow{} X_{i+1} \xrightarrow{} X_i[1]
    \end{align*}
    such that $X_0 = A$, and $X_{l} = 0$.
    \item There exist $X_0',\dots,X_l' \in \lperf(A^e)$ and for each $0\leq i < l$ a triangle
    \begin{align*}
        \eta_i'\colon \hspace{1cm}
        X_{i+1}' \xrightarrow{} \tilde{M}\lotimes_A X_i' \xrightarrow{} X_{i}' \xrightarrow{} X_{i+1}'[1]
    \end{align*}
    such that $X_0' = A$, and $X_l' = 0$.
\end{enumerate}
\end{lemma}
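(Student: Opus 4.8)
The plan is to produce the objects in part (i) as iterated $\lotimes_A$-products of the cones of the eigenmaps of $M$, and then to deduce part (ii) from part (i) by twisting with powers of $\tilde M$ and rotating triangles.

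\emph{Part (i).} Since $M$ is $(l,0,1)$-cyclotomic, fix $f_1,\dots,f_l\in\Hom_{A^e}(A,M)$ with $\Cone(f_l)\lotimes_A\cdots\lotimes_A\Cone(f_1)\simeq 0$. Put $X_0:=A$ and recursively $X_{i+1}:=\Cone(f_{i+1})\lotimes_A X_i$, so that $X_i\simeq\Cone(f_i)\lotimes_A\cdots\lotimes_A\Cone(f_1)$ for $i\geq 1$ and $X_l\simeq 0$ by hypothesis; each $X_i$ lies in $\lperf(A^e)$ since this subcategory is thick and stable under $-\lotimes_A N$ for $N\in\lperf(A^e)$. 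Tensoring the defining triangle $A\to M\to\Cone(f_{i+1})\to A[1]$ on the right with $X_i$ and using $A\lotimes_A X_i\simeq X_i$ yields the triangle $\eta_i\colon X_i\to M\lotimes_A X_i\to X_{i+1}\to X_i[1]$. This proves (i).

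\emph{Part (ii).} Since $M$ is invertible we have $\tilde M\lotimes_A M\simeq A$, hence $\tilde M^{\,i+1}\lotimes_A M\simeq\tilde M^{\,i}$ in $\Der(A^e)$ (with the convention $\tilde M^{\,0}:=A$). Set $X_i':=\tilde M^{\,i}\lotimes_A X_i[-i]$; then $X_0'=A$, $X_l'\simeq 0$ because $X_l\simeq 0$, and again $X_i'\in\lperf(A^e)$. Applying $\tilde M^{\,i+1}\lotimes_A -$ to $\eta_i$, rewriting the middle term $\tilde M^{\,i+1}\lotimes_A M\lotimes_A X_i$ as $\tilde M^{\,i}\lotimes_A X_i$ via the invertibility isomorphism, and shifting by $[-i]$, one obtains a distinguished triangle $\tilde M\lotimes_A X_i'\to X_i'\to X_{i+1}'[1]\to(\tilde M\lotimes_A X_i')[1]$. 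Rotating it backwards gives exactly the triangle $\eta_i'\colon X_{i+1}'\to\tilde M\lotimes_A X_i'\to X_i'\to X_{i+1}'[1]$ required in (ii).

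The verifications that every object stays inside $\lperf(A^e)$ and that the invertibility isomorphism may be substituted into the middle term of a distinguished triangle are routine. The one point that needs care is the shift by $[-i]$ built into $X_i'$: without it, twisting $\eta_i$ by $\tilde M^{\,i+1}$ and rotating would produce a triangle of the shape $X_{i+1}'[-1]\to\tilde M\lotimes_A X_i'\to X_i'\to X_{i+1}'$, whose connecting object $X_{i+1}'[-1]$ does not match the first term $X_{i+1}'$ of the next triangle; tracking this shift is what makes the family $(\eta_i')_i$ glue correctly. Finally, if $l=0$ the cyclotomicity condition degenerates to $A\simeq 0$, and both statements hold vacuously.
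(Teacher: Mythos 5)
Your proposal is correct and follows essentially the same route as the paper: part (i) by tensoring the defining triangles of the cones with the partial products $X_i=\Cone(f_i)\lotimes_A\cdots\lotimes_A\Cone(f_1)$, and part (ii) by twisting with $X_i'=(\tilde M[-1])^i\lotimes_A X_i$ (your $\tilde M^i\lotimes_A X_i[-i]$) and rotating, exactly as in the paper's proof. The point you flag about the built-in shift $[-i]$ is precisely the paper's choice of $(\tilde M[-1])^i$, so there is nothing further to add.
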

\begin{proof}
    Let ${\bf f}=(f_l,\dots,f_1)\in\Hom_{A^e}(A,M)$ with $\Cone({\bf f})\simeq 0$.
    Set $X_0 := A$ and
    \begin{align*}
        X_i := \Cone(f_i) \lotimes_A \cdots \lotimes_A \Cone(f_1) 
    \end{align*}
    for $1\leq i \leq l$.
    Note $X_l = \Cone({\bf f}) \simeq 0$.
    For each $0 \leq i< l$, 
    apply the exact functor $-\lotimes_A X_i$ to the triangle
    \begin{align*}
        A \xrightarrow{f_{i+1}} M \xrightarrow{} \Cone(f_{i+1}) \xrightarrow{} A[1]
    \end{align*}
    to obtain the triangle $\eta_i$.
    This shows the first part.
    For the second part, set 
    \begin{align*}
        X_i' := (\tilde{M}[-1])^{i}\lotimes_A X_i\,,
    \end{align*}
    for each $0\leq i \leq l$.
    Note $X_0' \simeq A$ and $X_l' \simeq 0$.
    For each $0\leq i < l$,
    consider the rotated triangle
    \begin{align*}
        \eta_i^{rot}\colon \hspace{1cm}
        X_{i+1}[-1] \xrightarrow{} X_i \xrightarrow{} M\lotimes_A X_i \xrightarrow{} X_{i+1}\,,
    \end{align*}
    and apply the exact functor $\tilde{M} \lotimes_A (\tilde{M}[-1])^{i}\lotimes_A -$ 
    to obtain the triangle $\eta_i'$.
\end{proof}

\begin{proposition}\label{prop:cyclotomic_entropy}
    Let $l,m,n\in \bZ$ with $l\geq 1$ and $n\neq 0$.
    Suppose $M\in \lperf(A^e)$ is $(l,m,n)$-cyclotomic. 
    Then, 
    \begin{align*}
        h^{\textup{pol}}(M\lotimes_A ?) \leq l-1
        && \text{and} &&
        h_t(M\lotimes_A ?) = -\frac{m}{n} t\,,
    \end{align*}
    for all $t\in\mathbb{R}$.
\end{proposition}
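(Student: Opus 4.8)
The plan is to transport the dimension-$(0,1)$ statements of Lemmas~\ref{lem:entropy_1} and~\ref{lem:entropy_2} to $M$ itself, using the elementary formal properties of categorical entropy from \cite{DHKK}. Write $G := M\lotimes_A ? \colon \perf(A)\to\perf(A)$; since $M$ is invertible, $G$ is an equivalence with inverse $G^{-1} = \tilde M\lotimes_A ?$. Set $L := M^n[m]$ and $\tilde L := L^{-1} = M^{-n}[-m]$, both in $\lperf(A^e)$. By Definition~\ref{def:cyclotomic_bimodule} the bimodule $L$ is $(l,0,1)$-cyclotomic. As exact endofunctors of $\perf(A)$ we have $L\lotimes_A ? \simeq [m]\circ G^{n}$ and $\tilde L\lotimes_A ? \simeq [-m]\circ G^{-n}$, where for $n<0$ we read $G^{n} := (G^{-1})^{|n|}$.

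For the upper bounds, apply Lemma~\ref{lem:entropy_2} to $L$: part (i) gives objects $X_0 = A, X_1, \dots, X_l = 0$ of $\lperf(A^e)$ and triangles $X_i \to L\lotimes_A X_i \to X_{i+1}\to X_i[1]$, which — viewed in $\perf(A)$ via the forgetful functor — are precisely the hypotheses of Lemma~\ref{lem:entropy_1} for $F = L\lotimes_A ?$ and the generator $A$; part (ii) gives an analogous filtration for $\tilde L\lotimes_A ?$, whose triangles are arranged in the reverse order, but the proof of Lemma~\ref{lem:entropy_1} only uses that each $FX_i$ is an extension of the two neighbouring terms and is indifferent to their order. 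Hence $h_t(L\lotimes_A ?)\le 0$ and $h_t(\tilde L\lotimes_A ?)\le 0$ for all $t\in\mathbb{R}$, and moreover $h^{\textup{pol}}(L\lotimes_A ?)\le l-1$ and $h^{\textup{pol}}(\tilde L\lotimes_A ?)\le l-1$ as soon as the respective entropies vanish at $t=0$.

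To pin these down, use that $L\lotimes_A ?$ is an equivalence with inverse $\tilde L\lotimes_A ?$: submultiplicativity of the complexity, $\delta_t(A,A)\le\delta_t(A,(L\lotimes_A?)^N A)\cdot\delta_t((L\lotimes_A?)^N A, A)$, combined with the invariance of $\delta_t$ under equivalences — which rewrites the second factor as $\delta_t(A,(\tilde L\lotimes_A?)^N A)$ — and the fact that $\delta_t(A,A)$ is a positive constant, yields $0\le h_t(L\lotimes_A?)+h_t(\tilde L\lotimes_A?)$ in the limit. Together with the two upper bounds this forces $h_t(L\lotimes_A?) = h_t(\tilde L\lotimes_A?) = 0$ for all $t$. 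Now $\delta_t(A, Y[k]) = e^{kt}\delta_t(A,Y)$ gives $h_t([k]\circ F) = kt + h_t(F)$ for every exact $F$, while $h_t(F^j) = j\,h_t(F)$ for $j\ge 1$ by \cite{DHKK}. Therefore, when $n\ge1$, $0 = h_t(L\lotimes_A?) = mt + n\,h_t(G)$, so $h_t(G) = -\tfrac{m}{n}t$; when $n\le -1$, the identity $0 = h_t(\tilde L\lotimes_A?) = -mt + |n|\,h_t(G)$ gives the same. In particular $h_0(G) = 0$, hence $h_0(L\lotimes_A?) = h_0(\tilde L\lotimes_A?) = 0$, and the polynomial-entropy bounds of the previous paragraph apply. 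Finally, since $\delta_0$ is shift-invariant, $\delta_0(A,(L\lotimes_A?)^N A) = \delta_0(A, G^{nN}A)$, so $h^{\textup{pol}}(L\lotimes_A?) = \liminf_N\log_N\delta_0(A, G^{nN}A)\ge h^{\textup{pol}}(G)$ (a $\liminf$ over the subsequence $n\bZ$, using $\log(nN)/\log N\to 1$ and $\delta_0(A, G^{nN}A)\ge 1$); hence $h^{\textup{pol}}(G)\le l-1$, and symmetrically via $\tilde L$ when $n\le-1$.

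I expect the lower bound $h_t(G)\ge -\tfrac{m}{n}t$ to be the main obstacle. Cyclotomicity only supplies cones of morphisms \emph{out of} $A$, so $\tilde M$ inherits no visible cyclotomic structure of its own; this is exactly what the dual filtration in Lemma~\ref{lem:entropy_2}(ii) circumvents. The remaining work is the bookkeeping relating $L\lotimes_A?$, $\tilde L\lotimes_A?$, $G$, $G^{-1}$ and the shifts $[\pm m]$, and the elementary case analysis on the sign of $n$.
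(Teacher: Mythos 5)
Your proof is correct and follows essentially the same route as the paper: pass to the $(l,0,1)$-cyclotomic bimodule $M^n[m]$, combine Lemmas \ref{lem:entropy_1} and \ref{lem:entropy_2} with the fact that the identity functor has entropy zero (properness of $A$) to force $h_t(M^n[m]\lotimes_A ?) = h_t(M^{-n}[-m]\lotimes_A ?) = 0$, and then unwind shifts and powers. The only differences are cosmetic: you re-derive by hand the subadditivity and invariance facts and the comparison $h^{\textup{pol}}(M\lotimes_A ?)\leq h^{\textup{pol}}(M^n[m]\lotimes_A ?)$ that the paper cites from \cite{DHKK} and \cite[Lemma 2.10]{FFO}, and you make the case distinction on the sign of $n$ explicit where the paper leaves it implicit.
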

\begin{proof}
    Fix $t\in \mathbb{R}$.
    The dg bimodule $N := M^n[m]$ is by assumption $(l,0,1)$-cyclotomic.
    Since $N\lotimes_A \tilde{N} \simeq A \simeq \tilde{N}\lotimes_A N$, we have
    \begin{align*}
        h_t(A\lotimes_A ?) \leq h_t(N\lotimes_A ?) + h_t(\tilde{N}\lotimes_A ?) 
    \end{align*}
    Since $A$ is proper, we have $h_t(A\lotimes_A ?) = 0$.
    By Lemmas \ref{lem:entropy_1} and \ref{lem:entropy_2}, we have that both $h_t(N\lotimes_A ?)$ and $h_t(\tilde{N}\lotimes_A ?)$ are non-positive.
    Therefore, both must be zero.
    It follows that 
    \begin{align*}
        0 = h_t(N\lotimes_A ?) = h_t(M^n\lotimes_A ?) + m t = nh_t(M\lotimes_A ?) + m t\,.
    \end{align*}
    This proves the second claim.
    On the other hand,
    \begin{align*}
        h^{\textup{pol}}(M\lotimes) \leq h^{\textup{pol}}(M^n\lotimes) = h^{\textup{pol}}(N\lotimes) \leq l-1
    \end{align*}
    where we use \cite[Lemma 2.10]{FFO} for the first estimate,
    and Lemmas \ref{lem:entropy_1} and \ref{lem:entropy_2} for the final estimate.
\end{proof}

\section{Serre cyclotomic algebras}\label{sec:serre_cyclotomic}

As before, let $A$ be a proper dg $K$-algebra.
Consider the dualizing dg bimodule $$\mathcal{N}_A := DA =\Hom_K(A,K) \in \Der(A^e)\,.$$
We call $A$ \emph{Iwanaga--Gorenstein} if $\mathcal{N}_A\in \lperf(A^e)$ and $\mathcal{N}_A$ is invertible.
In this case, the Nakayama functor
\begin{align*}
    \boldsymbol{\nu}_A := \mathcal{N}_A\lotimes_A - \colon \perf(A) \xrightarrow{\sim} \perf(A)
\end{align*}
is a Serre functor on $\perf(A)$.
The \emph{Coxeter transformation} of $A$ is the map $$\Phi_A\colon K_0(A) \to K_0(A)$$ induced by the Auslander--Reiten translation $\boldsymbol{\tau}_A := \boldsymbol{\nu}_A[-1]$.

\begin{definition}\label{def:serre_cyclotomic}
    Let $l,m,n\in\bZ$ with $l\geq 1$ and $n\neq 0$.
    We call a proper dg algebra $A$ \emph{Serre cyclotomic} of \emph{dimension} $(m,n)$ and \emph{order} at most $l$
    if $A$ is Iwanaga--Gorenstein and the inverse dualizing dg bimodule $\tilde{\mathcal{N}}_A$ is $(l,m,n)$-cyclotomic.
    We sometimes say that $A$ is $(l,m,n)$-Serre cyclotomic.
\end{definition}

Recall that an Iwanaga--Gorenstein dg algebra $A$ is called \emph{fractionally Calabi--Yau} of \emph{dimension} $(m,n)$
if there exists an isomorphism $\mathcal{N}_A^n \xrightarrow{\sim} A[m]$ in $\Der(A^e)$.
Note that $A$ is $(1,m,n)$-Serre cyclotomic if and only if $A$ is fractionally Calabi--Yau of dimension $(m,n)$.

\begin{corollary}\label{cor:serre_cyclotomic_cyclotomic_type}
    Let $l,m,n\in \bZ$ with $l\geq 1$ and $n\neq 0$. 
    Suppose $A$ is $(l,m,n)$-Serre cyclotomic. 
    Then, the Coxeter transformation $\Phi_A$ is cyclotomic with
    \begin{align*}
        (\Phi_A^{2n}-\bm{1})^l = 0\,.
    \end{align*}
\end{corollary}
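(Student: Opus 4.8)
The plan is to deduce this directly from Proposition~\ref{prop:induced_map_cyclotomic} by feeding in the inverse dualizing bimodule. By Definition~\ref{def:serre_cyclotomic}, Serre cyclotomicity of $A$ says precisely that $M := \tilde{\mathcal{N}}_A \in \lperf(A^e)$ is $(l,m,n)$-cyclotomic, so Proposition~\ref{prop:induced_map_cyclotomic} immediately gives the relation
\[
    (\Psi_{\tilde{\mathcal{N}}_A}^{2n} - \bm{1})^l = 0
\]
on $\K_0(A)$. All that remains is to translate $\Psi_{\tilde{\mathcal{N}}_A}$ into the Coxeter transformation.

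The key step is the identification of the induced maps. Since $A$ is Iwanaga--Gorenstein, $\tilde{\mathcal{N}}_A\lotimes_A -$ is inverse to the Nakayama functor $\boldsymbol{\nu}_A = \mathcal{N}_A\lotimes_A -$ on $\perf(A)$; hence $\tilde{\mathcal{N}}_A[1]\lotimes_A -$ is inverse to the Auslander--Reiten translation $\boldsymbol{\tau}_A = \boldsymbol{\nu}_A[-1]$. Passing to $\K_0(A)$ and using the identity $\Psi_{N[1]} = -\Psi_N$ recorded in the proof of Proposition~\ref{prop:induced_map_cyclotomic}, I obtain $\Phi_A^{-1} = \Psi_{\tilde{\mathcal{N}}_A[1]} = -\Psi_{\tilde{\mathcal{N}}_A}$. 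Raising to the $2n$-th power, with $2n$ even, the sign drops out: $\Phi_A^{-2n} = \Psi_{\tilde{\mathcal{N}}_A}^{2n}$, and therefore $(\Phi_A^{-2n} - \bm{1})^l = 0$.

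Finally, $\mathcal{N}_A$ is invertible, so $\Phi_A$ is an automorphism of $\K_0(A)$; multiplying the last identity by the invertible operator $(-1)^l\Phi_A^{2nl}$, which commutes with everything in sight, converts it into $(\Phi_A^{2n} - \bm{1})^l = 0$. In particular $\Phi_A^{2n}$ is unipotent, so $\Phi_A$ is cyclotomic and the corollary follows. I do not expect a genuine obstacle here: the only point demanding attention is the sign-and-shift bookkeeping behind $\Phi_A^{-1} = -\Psi_{\tilde{\mathcal{N}}_A}$ and the harmlessness of passing through inverses, which rests on $2n$ being even and on the invertibility of $\Phi_A$.
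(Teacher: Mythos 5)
Your proof is correct and takes essentially the same route as the paper: both apply Proposition~\ref{prop:induced_map_cyclotomic} to $M=\tilde{\mathcal{N}}_A$ and use the identity $\Phi_A=-\Psi_{\tilde{\mathcal{N}}_A}^{-1}$ (equivalent to your $\Phi_A^{-1}=-\Psi_{\tilde{\mathcal{N}}_A}$), the sign vanishing in the even power $2n$. You merely spell out the inverse-and-sign bookkeeping that the paper's proof declares ``immediate''.
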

\begin{proof}
    Since $\Phi_A = \Psi_{\mathcal{N}_A[-1]} = - \Psi_{\tilde{\mathcal{N}}_A}^{-1}$,
    this follows immediately from Proposition \ref{prop:induced_map_cyclotomic} with $M:=\tilde{\mathcal{N}}_A$.
\end{proof}

\begin{corollary}\label{cor:serre_cyclotomic_entropy}
    Let $l,m,n\in\mathbb{Z}$ with $l\geq 1$ and $n\neq 0$.
    Suppose $A$ is $(l,m,n)$-Serre cyclotomic.
    Then the entropy and polynomial entropy of the Serre functor satisfy 
    \begin{align*}
        h^{\textup{pol}}(\boldsymbol{\nu}_A)\leq l-1 && \text{and} && h_t(\boldsymbol{\nu}_A) = \frac{m}{n}\cdot t
    \end{align*}
    for all $t\in\mathbb{R}$.
    In particular, the fraction $\frac{m}{n}\in \mathbb{Q}$ is a derived invariant. 
\end{corollary}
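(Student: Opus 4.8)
The plan is to deduce Corollary \ref{cor:serre_cyclotomic_entropy} directly from Proposition \ref{prop:cyclotomic_entropy} applied to the dg bimodule $M := \tilde{\mathcal{N}}_A$, together with the standard fact that an exact autoequivalence and its inverse have opposite entropies. First I would observe that, by Definition \ref{def:serre_cyclotomic}, the hypothesis that $A$ is $(l,m,n)$-Serre cyclotomic means precisely that $\tilde{\mathcal{N}}_A \in \lperf(A^e)$ is invertible and $(l,m,n)$-cyclotomic. Hence Proposition \ref{prop:cyclotomic_entropy} gives
\begin{align*}
    h^{\textup{pol}}(\tilde{\mathcal{N}}_A\lotimes_A ?) \leq l-1
    && \text{and} &&
    h_t(\tilde{\mathcal{N}}_A\lotimes_A ?) = -\frac{m}{n}\,t
\end{align*}
for all $t\in\mathbb{R}$. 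Since $\boldsymbol{\nu}_A = \mathcal{N}_A\lotimes_A ?$ is by definition the quasi-inverse equivalence of $\tilde{\mathcal{N}}_A\lotimes_A ?$ on $\perf(A)$, it remains to transfer these two estimates from the inverse functor to $\boldsymbol{\nu}_A$ itself.

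For the entropy statement I would invoke the subadditivity of $h_t$ under composition, exactly as in the proof of Proposition \ref{prop:cyclotomic_entropy}: from $\mathcal{N}_A\lotimes_A\tilde{\mathcal{N}}_A\simeq A\simeq \tilde{\mathcal{N}}_A\lotimes_A\mathcal{N}_A$ and $h_t(A\lotimes_A ?)=0$ (properness of $A$) we get $0 \leq h_t(\boldsymbol{\nu}_A) + h_t(\tilde{\mathcal{N}}_A\lotimes_A ?)$, hence $h_t(\boldsymbol{\nu}_A) \geq \tfrac{m}{n}t$; applying the same inequality with the roles reversed, or noting that $h_t(\boldsymbol{\nu}_A^{-1}) = -h_{?}(\ldots)$ is itself bounded above, yields the reverse inequality and thus $h_t(\boldsymbol{\nu}_A) = \tfrac{m}{n}t$. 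Concretely: $\boldsymbol{\nu}_A^N\lotimes_A ? = \mathcal{N}_A^N\lotimes_A ?$ and by Proposition \ref{prop:cyclotomic_entropy} applied to $\mathcal{N}_A=\widetilde{\tilde{\mathcal{N}}_A}$ (which is $(l,m,n)$-cyclotomic as the inverse of an $(l,m,n)$-cyclotomic invertible bimodule, since $\Cone(g)\simeq \Cone(f)[-1]\lotimes\widetilde{(\cdot)}$ up to rearrangement, mirroring Lemma \ref{lem:entropy_2}(ii)) we directly obtain $h_t(\boldsymbol{\nu}_A)=-h_t(\tilde{\mathcal{N}}_A\lotimes_A ?)=\tfrac{m}{n}t$. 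For the polynomial entropy, once we know $h_0(\boldsymbol{\nu}_A)=0$, the bound $h^{\textup{pol}}(\boldsymbol{\nu}_A)\leq l-1$ follows from Lemmas \ref{lem:entropy_1} and \ref{lem:entropy_2}(i) applied to the $(l,0,1)$-cyclotomic bimodule $\mathcal{N}_A^n[m]$, together with \cite[Lemma 2.10]{FFO} to pass from $\mathcal{N}_A^n$ back to $\mathcal{N}_A$ — this is verbatim the argument of Proposition \ref{prop:cyclotomic_entropy} with $M$ replaced by $\mathcal{N}_A$.

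Finally, the derived-invariance of $\tfrac{m}{n}$ is immediate: the categorical entropy $h_t$ of the Serre functor is preserved under any triangulated equivalence $\perf(A)\simeq\perf(B)$ (the Serre functor is unique up to natural isomorphism and entropy is invariant under conjugation of an endofunctor by an equivalence, a fact recorded in \cite{DHKK}), and the slope $\tfrac{m}{n}$ is recovered from $h_t(\boldsymbol{\nu}_A)$ as the coefficient of $t$; so two derived-equivalent Serre cyclotomic algebras must have the same value of $\tfrac{m}{n}$. The only point requiring a little care is the symmetric observation that the inverse of an $(l,m,n)$-cyclotomic invertible bimodule is again $(l,m,n)$-cyclotomic — I expect this to be the main (though minor) obstacle — and it is handled exactly as in the passage between parts (i) and (ii) of Lemma \ref{lem:entropy_2}: dualizing the relation $\Cone(f_l)\lotimes_A\cdots\lotimes_A\Cone(f_1)\simeq 0$ and conjugating by the invertible bimodule produces the required cone sequence for $\mathcal{N}_A^n[m]$, so that all hypotheses of Proposition \ref{prop:cyclotomic_entropy} are met for $M=\mathcal{N}_A$ as well.
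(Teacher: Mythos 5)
Your overall route is the paper's: reduce to Proposition \ref{prop:cyclotomic_entropy} and transfer the estimates to the Serre functor. But the pivotal transfer step is flawed as written. The ``standard fact'' that an exact autoequivalence and its inverse have opposite entropies is false in general (both can have strictly positive entropy), and subadditivity only gives the one-sided bound $h_t(\boldsymbol{\nu}_A)\geq -h_t(\tilde{\mathcal{N}}_A\lotimes_A ?)=\frac{m}{n}t$. Your concrete fix --- that $\mathcal{N}_A$ is again $(l,m,n)$-cyclotomic --- is mis-signed: by Proposition \ref{prop:cyclotomic_entropy} it would force $h_t(\boldsymbol{\nu}_A)=-\frac{m}{n}t$, contradicting both the statement being proved and your own lower bound whenever $m\neq 0$ (e.g.\ for canonical algebras with $\delta\neq 0$, which are $(2,\pm p,\pm p)$-Serre cyclotomic). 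Moreover, your sketch of why the inverse should be cyclotomic does not produce the required data: dualizing or conjugating the maps $f_i\in\Hom_{A^e}(A,\tilde{\mathcal{N}}_A^n[m])$ yields morphisms $\mathcal{N}_A^{n}[-m]\to A$, i.e.\ maps \emph{into} $A$, whereas Definition \ref{def:cyclotomic_bimodule} asks for maps \emph{out of} $A$. For the same reason, $\mathcal{N}_A^n[m]$ is not known to be $(l,0,1)$-cyclotomic, so Lemma \ref{lem:entropy_2}(i) cannot be applied to it for the polynomial bound; the relevant input is part (ii) of that lemma, applied to $\tilde{\mathcal{N}}_A^n[m]$.

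The repair is immediate, and it is what the paper's one-line proof (``the proof of Proposition \ref{prop:cyclotomic_entropy} with $\tilde M=\mathcal{N}_A$'') amounts to: by the paper's convention for negative powers, $\tilde{\mathcal{N}}_A^{\,n}\simeq\mathcal{N}_A^{-n}$, so the very same maps $f_i$ witness that $\mathcal{N}_A$ is $(l,m,-n)$-cyclotomic --- the sign of $n$ flips, not that of $m$. Applying Proposition \ref{prop:cyclotomic_entropy} with parameters $(l,m,-n)$ to $M=\mathcal{N}_A$ then gives $h_t(\boldsymbol{\nu}_A)=-\frac{m}{-n}\,t=\frac{m}{n}\,t$ and $h^{\textup{pol}}(\boldsymbol{\nu}_A)\leq l-1$ in one stroke; unwinding, this is precisely the combination of Lemma \ref{lem:entropy_2}(ii) with Lemma \ref{lem:entropy_1} and \cite[Lemma 2.10]{FFO} used to control $\mathcal{N}_A^{n}[-m]\lotimes_A ?$ and hence $\boldsymbol{\nu}_A$, rather than any general statement about entropies of inverses. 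Your argument for the derived invariance of $\frac{m}{n}$ is fine and agrees with the paper's.
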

\begin{proof}
    The properties of the entropy and polynomial entropy follow from the proof of Proposition \ref{prop:cyclotomic_entropy} with $\tilde{M} = \mathcal{N}_A$.
    It follows that the fraction $\frac{m}{n}$ is the Serre dimension of \cite{EL} which is a derived invariant because Serre functors are unique up to natural isomorphism.
\end{proof}

\begin{lemma}\label{lem:serre_cyclotomic_derived_invariant}
    Let $l,m,n\in \bZ$ with $l\geq 1$ and $n\neq 0$.
    Suppose that $A$ and $B$ are derived Morita equivalent Iwanaga--Gorenstein dg algebras.
    Then, $A$ is $(l,m,n)$-Serre cyclotomic if and only if $B$ is $(l,m,n)$-Serre cyclotomic.
\end{lemma}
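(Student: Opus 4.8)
The plan is to transport the cyclotomic structure on $\tilde{\mathcal{N}}_A$ across the derived Morita equivalence. Recall that a derived Morita equivalence between $A$ and $B$ is realized by a two-sided tilting complex $T \in \Der(A^e\text{-}B^e)$ — more precisely, by objects inducing mutually inverse equivalences $\perf(A) \simeq \perf(B)$; write $\Theta := T \lotimes_A - \lotimes_A T^{-1}$ for the induced equivalence $\Der(A^e) \to \Der(B^e)$ on bimodule categories (equivalently the equivalence on the subcategory of bimodules coming from functors $\perf \to \perf$). The key point is that this equivalence is monoidal: $\Theta(M \lotimes_A M') \simeq \Theta(M) \lotimes_B \Theta(M')$ and $\Theta(A) \simeq B$, since the two $T$'s cancel. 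First I would record that $\Theta$ restricts to an equivalence $\lperf(A^e) \xrightarrow{\sim} \lperf(B^e)$ compatible with $(-)\tilde{}$, so that it sends invertible bimodules to invertible bimodules.

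Next I would check that $\Theta$ intertwines the dualizing bimodules: $\Theta(\mathcal{N}_A) \simeq \mathcal{N}_B$. This is essentially the statement that the Serre functor is preserved under derived equivalence — the Serre functor is characterized intrinsically by the bifunctorial isomorphism $\Hom_A(X,Y) \cong D\Hom_A(Y,\boldsymbol{\nu}_A X)$, which transports across the equivalence $\perf(A)\simeq\perf(B)$, and a Serre functor is unique up to natural isomorphism. Hence $\Theta(\mathcal{N}_A)\simeq \mathcal{N}_B$, and since $\Theta$ is monoidal and sends $A$ to $B$, it follows that $A$ is Iwanaga--Gorenstein iff $B$ is (the hypothesis already grants this on both sides), and $\Theta(\tilde{\mathcal{N}}_A) \simeq \tilde{\mathcal{N}}_B$.

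With these two ingredients the result is formal. Suppose $A$ is $(l,m,n)$-Serre cyclotomic, so by Definition~\ref{def:serre_cyclotomic} and Definition~\ref{def:cyclotomic_bimodule} there are $f_1,\dots,f_l \in \Hom_{A^e}(A, \tilde{\mathcal{N}}_A^{\,n}[m])$ with $\Cone(f_l)\lotimes_A\cdots\lotimes_A\Cone(f_1)\simeq 0$. Applying the monoidal equivalence $\Theta$ and using $\Theta(A)\simeq B$, $\Theta(\tilde{\mathcal{N}}_A)\simeq\tilde{\mathcal{N}}_B$, and $\Theta(\Cone(g))\simeq\Cone(\Theta(g))$ (since $\Theta$ is exact), we obtain $g_i := \Theta(f_i) \in \Hom_{B^e}(B,\tilde{\mathcal{N}}_B^{\,n}[m])$ with $\Cone(g_l)\lotimes_B\cdots\lotimes_B\Cone(g_1)\simeq \Theta(0) \simeq 0$. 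Thus $\tilde{\mathcal{N}}_B$ is $(l,m,n)$-cyclotomic and $B$ is $(l,m,n)$-Serre cyclotomic; the converse is symmetric.

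The main obstacle is purely bookkeeping: making precise the equivalence $\Theta$ on bimodule categories and its monoidality in the dg setting, i.e. that conjugation by a two-sided tilting complex is a monoidal functor sending the unit to the unit, and that it commutes with $\rHom_{(-)}(-,(-))$ so as to preserve $(-)\tilde{}$ and hence $\mathcal{N}$. None of this is difficult — it is standard for derived Morita theory of dg algebras — but it is the only place where real content sits, so I would either cite it or spell out the monoidality carefully; everything after that is a one-line transport of Definition~\ref{def:cyclotomic_bimodule}.
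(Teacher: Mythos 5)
Your proposal is correct and follows essentially the same route as the paper: the paper realizes your conjugation functor $\Theta$ concretely as $L\lotimes_{A^e}-$ with $L:=M\otimes_K\tilde{M}$ (so that $L\lotimes_{A^e}X \simeq M\lotimes_A X\lotimes_A\tilde{M}$), identifies $\tilde{\mathcal{N}}_B\simeq L\lotimes_{A^e}\tilde{\mathcal{N}}_A$ by uniqueness of Serre functors, and transports the maps $f_i$ exactly as you do, with the ``monoidality'' you flag handled by the explicit telescoping computation using $\tilde{M}\lotimes_B M\simeq A$.
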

\begin{proof}
    Set $C := A\lotimes_K B^\op$ and identify $C^\op \cong B \lotimes_K A^\op$.
    By assumption there exists a dg $C$-module $M$ and a dg $C^\op$-module $\tilde{M}$
    such that 
    \begin{equation*}
        \begin{tikzcd}
            M \lotimes_A ? \colon \Der(A) \ar[r,shift left = 1mm] & \Der(B) \,\colon\! \tilde{M}\lotimes_B ? \ar[l,shift left = 1mm] \,
        \end{tikzcd}
    \end{equation*}
    are well-defined mutually quasi-inverse equivalences.
    With $L := M\otimes_K \tilde{M}$ and $\tilde{L} := \tilde{M}\otimes_K M$ we obtain equivalences
    \begin{equation*}
    \begin{tikzcd}
        L \lotimes_{A^e} ? \colon \Der(A^e) \ar[r,shift left = 1mm] & \Der(B^e) \,\colon\! \tilde{L}\lotimes_{B^e} ? \ar[l,shift left = 1mm] \,.
    \end{tikzcd}
    \end{equation*}
    By the uniqueness of Serre functors,
    we find $\tilde{\mathcal{N}}_B \simeq L \lotimes_{A^e} \tilde{\mathcal{N}}_A$ hence $\tilde{\mathcal{N}}_B^n \simeq L \lotimes_{A^e} \tilde{\mathcal{N}}_A^n$.
    Assume that $A$ is $(l,m,n)$-Serre cyclotomic.
    Let ${\bf f}\in \Hom_{A^e}(A,\tilde{\mathcal{N}}_A^n[m])^l$ with $\Cone({\bf f}) \simeq 0$.
    Consider ${\bf f}' \in \Hom_{B^e}(B, \tilde{\mathcal{N}}_B^n[m])^l$ defined by 
    \begin{align*}
        f'_i := L \lotimes_{A^e} f_i
    \end{align*}
    for all $1\leq i\leq l$. 
    Then 
    \begin{align*}
        \Cone({\bf f}') &\simeq (L \lotimes_{A^e} \Cone(f_l)) \lotimes_{B} \dots \lotimes_{B} (L \lotimes_{A^e} \Cone(f_1)) \\
        & \simeq M\lotimes_A \Cone(f_l) \lotimes_A \tilde{M} \lotimes_{B} \dots \lotimes_{B} M\lotimes_A \Cone(f_1) \lotimes_A \tilde{M} \\
        & \simeq L \lotimes_{A^e} \Cone({\bf f}) \simeq 0
    \end{align*}
    where we use that $\tilde{M}\lotimes_B M \simeq A$ in $\Der(A^e)$.
\end{proof}

\subsection{(Higher) canonical algebras}\label{sec:canonical}

Let $t\geq 2$, ${\bf p} = (p_1,\dots,p_t) \in \mathbb{N}_{\geq 1}^t$ a weight sequence, 
and $\boldsymbol{\lambda} = (\lambda_1,\dots,\lambda_t)\in \bP^1(K)^t$ pairwise different points on the projective line.
We may assume without loss of generality that $\lambda_1 = \infty$, $\lambda_2 = 0$ and $\lambda_i\in K^*$ for all $i\geq 2$.
Ringel's canonical algebra $A:=A({\bf p},\boldsymbol{\lambda})$ is the path algebra of the quiver $Q({\bf p})$ given by
\[
\begin{tikzcd}[row sep = .1cm, column sep = 1.5cm] 
    & 1_1 \ar["x_{1,2}",r,<-] & 1_2 \ar["x_{1,3}",r,<-] & \cdots \ar["x_{1,p_1-1}",r,<-] & 1_{p_1-1} \ar["x_{1,p_1}",ddr,<-] \\
    & 2_1 \ar["x_{2,2}",r,<-] & 2_2 \ar["x_{2,3}",r,<-] & \cdots \ar["x_{2,p_2-1}",r,<-] & 2_{p_2-1} \ar["x_{2,p_2}"',dr,<-] \\
    0 \ar["x_{1,1}",uur,<-] \ar["x_{2,1}"',ur,<-] \ar["x_{n,1}"',dr,<-] & \vdots & \vdots & \ddots & \vdots & \infty \\
    & t_1 \ar["x_{t,2}",r,<-] & t_2 \ar["x_{t,3}",r,<-] & \cdots \ar["x_{t,p_t-1}",r,<-] & t_{p_t-1} \ar["x_{t,p_t}"',ur,<-] \\
\end{tikzcd}
\]
modulo the ideal $I({\bf p},{\boldsymbol{\lambda}})$ generated by the relations
\begin{align*}
    x_{i,1}x_{i,2}\cdots x_{i,p_i} = x_{2,1}\cdots x_{2,p_2} - \lambda_i x_{1,1}\cdots x_{1,p_1}
\end{align*}
for all $3\leq i\leq t$.
We consider $A$ as a dg algebra concentrated in degree $0$.
Geigle--Lenzing \cite{GL} proved that $\perf(A)$ is equivalent to the bounded derived category $\Der^b(\coh(\bX))$ of coherent sheaves on a weighted projective line $\bX := \bX({\bf p},\boldsymbol{\lambda})$.
Its Picard group $\bL := \bL({\bf p})$ is the abelian group generated by $\vec{x}_1,\dots,\vec{x}_t,\vec{c}$ subject to the relations $\vec{c} = p_i\vec{x}_i$ for all $1\leq i\leq t$.
Write $\mathcal{O}(\vec{y})$ for the line bundle corresponding to $\vec{y}\in \bL$.
In particular, for the neutral element $\vec{0}\in\bL$, is $\mathcal{O} := \mathcal{O}(\vec{0})$ the structure sheaf of $\bX$.
A Serre functor on $\Der^b(\coh(\bX))$ is given by $\mathcal{O}(\vec{\omega})[1]\lotimes_{\bX}?$
where $\mathcal{O(\vec{\omega})}$ is the dualizing sheaf corresponding to
\begin{align*}
    \vec{\omega} := (t-2)\vec{c} - \sum_{i=1}^t \vec{x}_i\,.
\end{align*}

\begin{proposition}\label{prop:canonical}
    Let $t\geq 2$, 
    ${\bf p} \in \mathbb{N}^t$ and 
    $\boldsymbol{\lambda} \in \bP^1(K)$ as above.
    Set 
    \begin{align*}
        p:=\lcm(p_1,\dots,p_t)
        &&\text{and}&&
        \delta := (t-2) p - \sum_{i=1}^t \frac{p}{p_i}\,.
    \end{align*}
        \text{The canonical algebra $A({\bf p},\boldsymbol{\lambda})$ is }
        $\begin{cases}
            \text{$(2,p,p)$-Serre cyclotomic} & \text{if $\delta <0$;}\\
            \text{fractionally $(p,p)$-Calabi--Yau} & \text{if $\delta =0$;}\\
            \text{$(2,-p,-p)$-Serre cyclotomic} & \text{if $\delta >0$.}
        \end{cases}$
\end{proposition}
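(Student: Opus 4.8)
\emph{Proof plan.}
The plan is to transport the whole question across the Geigle--Lenzing derived equivalence $\perf(A)\simeq\Der^b(\coh(\bX))$ and then argue on the weighted projective line. Since $A$ is finite-dimensional of finite global dimension it is homologically smooth and proper, hence Iwanaga--Gorenstein, and the equivalence carries the Serre functor $\boldsymbol{\nu}_A$ to $\mathbb{S}:=\mathcal{O}(\vec{\omega})[1]\lotimes_{\bX}-$. By uniqueness of Serre functors it thus suffices to produce, inside $\Der^b(\coh(\bX))$, the morphisms witnessing that $\mathbb{S}^{-1}=\mathcal{O}(-\vec{\omega})[-1]\lotimes_{\bX}-$ is cyclotomic in the sense of Definition~\ref{def:cyclotomic_bimodule}; equivalently one may invoke Lemma~\ref{lem:serre_cyclotomic_derived_invariant} after realizing $\Der^b(\coh(\bX))$ as the perfect derived category of the Geigle--Lenzing tilting algebra. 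Throughout, the forgetful functor $\Der(A^e)\to\Der(A)$ is conservative, so the required vanishing of cones may be tested on underlying objects of $\Der^b(\coh(\bX))$; and for bimodules in $\lperf(A^e)$ the space $\Hom_{A^e}(A,\tilde{\mathcal{N}}_A^{\,n}[m])$ is the space of natural transformations from $\mathrm{id}$ to the represented functor $\mathbb{S}^{-n}[m]$.

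The computation unifying the three cases is the identity $p\vec{\omega}=\delta\vec{c}$ in $\bL$: indeed
\begin{align*}
    p\vec{\omega}=p(t-2)\vec{c}-\sum_{i=1}^{t}p\vec{x}_i=p(t-2)\vec{c}-\sum_{i=1}^{t}\tfrac{p}{p_i}(p_i\vec{x}_i)=\delta\vec{c}.
\end{align*}
Hence $\mathbb{S}^{p}\simeq\mathcal{O}(\delta\vec{c})[p]\lotimes_{\bX}-$. When $\delta=0$ this reads $\mathbb{S}^{p}\simeq[p]$, which transported back to $\Der(A^e)$ says $\mathcal{N}_A^{p}\simeq A[p]$; thus $A$ is fractionally $(p,p)$-Calabi--Yau, i.e.\ $(1,p,p)$-Serre cyclotomic.

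Now suppose $\delta\neq 0$ and set $k:=|\delta|\geq 1$. Chasing shifts, the functor represented by $\tilde{\mathcal{N}}_A^{\,n}[m]$ corresponds under the equivalence to the twist functor $\mathcal{O}(k\vec{c})\lotimes_{\bX}-$ precisely when $(m,n)=(p,p)$ in the case $\delta<0$ and when $(m,n)=(-p,-p)$ in the case $\delta>0$; in both cases $\mathcal{O}(k\vec{c})$ has nonzero global sections. Moreover
\begin{align*}
    \Hom_{A^e}\!\bigl(A,\tilde{\mathcal{N}}_A^{\,n}[m]\bigr)\;\cong\;\operatorname{Nat}\!\bigl(\mathrm{id},\mathcal{O}(k\vec{c})\lotimes_{\bX}-\bigr)\;\cong\;\Hom_{\bX}\!\bigl(\mathcal{O},\mathcal{O}(k\vec{c})\bigr)=H^0\!\bigl(\bX,\mathcal{O}(k\vec{c})\bigr),
\end{align*}
the last isomorphism being the standard identification of natural transformations from the identity into a line-bundle twist with global sections of that line bundle. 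I would then take $s_1,s_2\in H^0(\bX,\mathcal{O}(k\vec{c}))$ to be the $k$-th powers of the two independent canonical sections of $\mathcal{O}(\vec{c})$, which vanish only at the distinct points $\lambda_1=\infty$ and $\lambda_2=0$ respectively, so that $s_1$ and $s_2$ have empty common vanishing locus; let $f_1,f_2\in\Hom_{A^e}(A,\tilde{\mathcal{N}}_A^{\,n}[m])$ be the corresponding morphisms (multiplication by $s_i$).

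It remains to check $\Cone(f_2)\lotimes_A\Cone(f_1)\simeq 0$, equivalently that the composite endofunctor of $\Der^b(\coh(\bX))$ vanishes. For $X\in\Der^b(\coh(\bX))$ one has $\Cone(f_1)\lotimes_A X\simeq\Cone\bigl(X\xrightarrow{s_1}\mathcal{O}(k\vec{c})\lotimes_{\bX}X\bigr)$, and applying the analogous cone of multiplication by $s_2$ presents $\Cone(f_2)\lotimes_A\Cone(f_1)\lotimes_A X$ as the total complex of the commuting square whose four edges are multiplication by $s_1$ or $s_2$; this total complex is $X\lotimes_{\bX}\bigl[\mathcal{O}\to\mathcal{O}(k\vec{c})^{\oplus2}\to\mathcal{O}(2k\vec{c})\bigr]$, i.e.\ $X$ tensored with the Koszul complex of the section $(s_1,s_2)$ of $\mathcal{O}(k\vec{c})^{\oplus2}$. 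Since $s_1$ and $s_2$ have no common zero this Koszul complex is acyclic, so the composite functor vanishes; the case $\delta>0$ is identical after replacing $\tilde{\mathcal{N}}_A$ by $\mathcal{N}_A$ and $k$ by $\delta$. The main obstacle is not the geometry but the bookkeeping around the equivalence: one must set up carefully the dictionary between dg bimodules over $A$, Fourier--Mukai kernels on $\bX\times\bX$ and autoequivalences, so that $\Hom$-spaces and cones can be computed geometrically, and identify the composite-of-cones bimodule with the Koszul complex above; once this is in place, acyclicity of the Koszul complex of two sections with empty common zero locus is local and immediate.
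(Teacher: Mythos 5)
Your proposal is correct and follows essentially the same route as the paper: transport the Serre functor across the Geigle--Lenzing equivalence, use $p\vec{\omega}=\delta\vec{c}$ to identify $\Hom_{A^e}(A,\tilde{\mathcal{N}}_A^{\,n}[m])$ with sections of $\mathcal{O}(|\delta|\vec{c})$, pick two sections with disjoint zero loci, and conclude by acyclicity of the resulting Koszul complex. The only cosmetic difference is that the paper routes the comparison through the canonical tilting bundle $T$ with $\End_{\bX}(T)^{\op}\cong A$ rather than appealing to Lemma~\ref{lem:serre_cyclotomic_derived_invariant}, and it treats the kernel-versus-bimodule bookkeeping at the same level of detail that you flag.
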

\begin{proof}
    Let $T\in \coh(\bX)$ be the canonical tilting bundle with $\End_{\bX}(T)^\op \cong A$.
    We have the following, up to natural isomorphism, commutative diagrams 
    \begin{equation*}
        \begin{tikzcd}[column sep = 2cm]
            \perf(A) \ar[r, shift left = 1mm, "T\lotimes_A?"] \ar[d,"A\lotimes_A?"'] & \Der^b(\coh(\bX)) \ar[l, shift left = 1mm, "\text{$\rHom_\bX(T,?)$}"] \ar[d,"\mathcal{O}\lotimes_{\bX}?"]\\
            \perf(A) \ar[r, shift left = 1mm, "T\lotimes_A?"] & \Der^b(\coh(\bX)) \ar[l, shift left = 1mm, "\text{$\rHom_\bX(T,?)$}"] 
        \end{tikzcd}
        \hspace{1cm}
        \begin{tikzcd}[column sep = 2cm]
            \perf(A) \ar[r, shift left = 1mm, "T\lotimes_A?"] \ar[d,"\text{$\mathcal{N}_A[-1]$}\lotimes_A?"'] & \Der^b(\coh(\bX)) \ar[l, shift left = 1mm, "\text{$\rHom_\bX(T,?)$}"] \ar[d,"\mathcal{O}(\vec{\omega})\lotimes_{\bX}?"]\\
            \perf(A) \ar[r, shift left = 1mm, "T\lotimes_A?"] & \Der^b(\coh(\bX)) \ar[l, shift left = 1mm, "\text{$\rHom_\bX(T,?)$}"] 
        \end{tikzcd}
    \end{equation*}
    where the horizontal functors are mutually quasi-inverse equivalences.
    If $\delta = 0$, it is well-known that $A$ is fractionally Calabi--Yau of dimension $(p,p)$.
    Suppose $\delta\neq 0$.
    Let $\varepsilon=\pm 1$ such that $\varepsilon\delta = \lrvert{\delta} > 0$.
    Note that $\vec{y} := \varepsilon p \vec{\omega} = \lrvert{\delta}\vec{c}$ in $\bL$. 
    It follows that
    \begin{align*}
        \Hom_{A^e}(A,(\mathcal{N}_A[-1])^{\varepsilon p}) \simeq \Hom_{\bX}(\mathcal{O},\mathcal{O}(\vec{y})) \simeq K[T_1,T_2]_{\lrvert{\delta}}\,,
    \end{align*}
    where the latter denotes the degree $\lrvert{\delta}$ homogeneous part of the polynomial ring in two variables.
    Let $f_1,f_2\in \Hom_{\bX}(\mathcal{O},\mathcal{O}(\vec{y}))$ be the two homomorphisms corresponding to $T_1^{\lrvert{\delta}},T_2^{\lrvert{\delta}} \in K[T_1,T_2]_{\lrvert{\delta}}$, respectively.
    In $\Der^b(\coh(\bX))$, the derived tensor product $\Cone(f_2)\lotimes_{\bX}\Cone(f_1)$ is isomorphic to the complex
    \begin{align*}
        0 \to \mathcal{O} \xrightarrow{\bsm{f_1\\f_2}} \mathcal{O}(\vec{y})\oplus \mathcal{O}(\vec{y}) \xrightarrow{\bsm{-f_2 & f_1}} \mathcal{O}(2\vec{y}) \to 0\,.
    \end{align*}
    This is the Koszul complex of the common zero locus of the sections $f_1$ and $f_2$. 
    Its zeroth cohomology vanishes in $\coh(\bX)$. It follows that $\Cone(f_2)\lotimes_{\bX}\Cone(f_1)$ is acyclic.
\end{proof}

\begin{remark}
    Along the same lines one shows that $d$-canonical algebras from \cite{HIMO} are Serre cyclotomic of order at most $d+1$.
\end{remark}

\subsection{Graded gentle algebras}\label{sec:graded_gentle}

In this section we assume that $A = KQ/I$
is a graded gentle algebra given by a finite connected quiver $Q$ and an ideal $I$ 
and the grading is given by a function $\lrvert{\cdot}\colon Q_1 \to \bZ$
on the set of arrows $Q_1$ of $Q$.
We consider $A$ as a dg algebra with zero differential.
Recall that a dg algebra $A$ is called homologically smooth if $A$ is perfect as a dg $A$-bimodule.

\begin{proposition}\label{prop:graded_gentle}
    Let $A$ be a homologically smooth and proper connected graded gentle algebra.
    Then $A$ is Serre cyclotomic if and only if $A$ is derived equivalent to a hereditary algebra of type $\mathbb{A}$ or $\tilde{\mathbb{A}}$.
\end{proposition}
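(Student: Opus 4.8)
The plan is to prove both directions using the structure theory of graded gentle algebras together with the entropy obstruction from Corollary~\ref{cor:serre_cyclotomic_entropy}. For the ``if'' direction, recall that hereditary algebras of type $\mathbb{A}$ are of finite representation type and hence fractionally Calabi--Yau, so they are $(1,m,n)$-Serre cyclotomic by the remark following Definition~\ref{def:serre_cyclotomic}; and path algebras of type $\tilde{\mathbb{A}}$ are the prototypical examples of Serre cyclotomic algebras mentioned in the introduction (their Auslander--Reiten translation acts cyclotomically via the tube structure, and one exhibits an explicit order-$2$ cyclotomic structure on the inverse dualizing bimodule, essentially the $\delta<0$ computation of Proposition~\ref{prop:canonical} with $t=2$). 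Since Serre cyclotomicity is a derived invariant among Iwanaga--Gorenstein dg algebras by Lemma~\ref{lem:serre_cyclotomic_derived_invariant}, any algebra derived equivalent to one of these is again Serre cyclotomic. One subtlety to check is that ``$\mathbb{A}$'' here must mean a \emph{linear} $\mathbb{A}_n$ quiver viewed in degree $0$: a graded gentle algebra can be derived equivalent to such, and we should make sure the statement's ``type $\mathbb{A}$'' is read that way (finite Dynkin), while ``type $\tilde{\mathbb{A}}$'' allows any orientation of the affine cycle.

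For the ``only if'' direction, the main input is the classification of homologically smooth proper graded gentle algebras up to derived equivalence in terms of the combinatorics of their \emph{surface model} (a marked surface with arcs, or equivalently the associated ribbon graph / line field). The key invariant controlling entropy is the \emph{pseudo-Anosov} versus \emph{reducible/periodic} dichotomy for the mapping-class-type datum attached to the Serre functor, or more elementarily the dynamics of the Coxeter/Serre action computed combinatorially from the permitted and forbidden paths. Concretely, I would argue: if $A$ is Serre cyclotomic, then by Corollary~\ref{cor:serre_cyclotomic_entropy} the categorical entropy $h_t(\boldsymbol{\nu}_A)$ is linear in $t$ with slope the Serre dimension $m/n$, and $h_0(\boldsymbol{\nu}_A)=0$; moreover by Corollary~\ref{cor:serre_cyclotomic_cyclotomic_type} the Coxeter transformation $\Phi_A$ is cyclotomic. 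For graded gentle algebras one has an explicit formula (in the literature on gentle algebras and their surfaces, e.g. via the AG-invariant or via counting the lengths of maximal permitted/forbidden paths) for $h_0(\boldsymbol{\nu}_A)$ as the logarithm of the spectral radius of a combinatorially defined nonnegative integer matrix built from the quiver and relations; vanishing of this entropy forces that matrix to have spectral radius $1$, which severely constrains $Q$ and $I$. I would then enumerate the surviving cases and identify each with a linear $\mathbb{A}_n$ or an affine $\tilde{\mathbb{A}}$ cycle (with the grading forced to be equivalent, via the derived-equivalence moves on gentle algebras, to the degree-$0$ one).

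In slightly more detail, the combinatorial heart is this: a homologically smooth proper gentle algebra $A=KQ/I$ has a description of $\perf(A)$ and of $\boldsymbol{\nu}_A$ via the associated dissected surface, and the entropy $h_0(\boldsymbol{\nu}_A)$ equals $\log\rho$ where $\rho$ is the spectral radius of the ``AR-quiver incidence'' matrix or, equivalently, the growth rate of the Serre functor acting on the set of (graded) strings and bands. Homological smoothness plus properness rules out infinite-dimensionality issues and guarantees $\boldsymbol{\nu}_A$ is an honest autoequivalence with a Serre functor. The condition $\rho=1$ together with $\Phi_A$ cyclotomic (no eigenvalue off the unit circle, all on roots of unity up to the unipotent part) translates into: the surface has no ``hyperbolic'' piece, i.e. it is a disc with marked points (giving $\mathbb{A}_n$) or an annulus (giving $\tilde{\mathbb{A}}$), since any surface of higher complexity carries a pseudo-Anosov-type Serre functor with $\rho>1$. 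Then I would invoke the known derived classification of gentle algebras supported on the disc and the annulus to conclude the derived equivalence with a hereditary algebra of the stated type, checking that the grading can be normalized to concentration in degree $0$ (the only graded-gentle presentations of these derived categories that are proper and homologically smooth reduce, under the standard derived-equivalence moves, to the ungraded hereditary one).

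The step I expect to be the main obstacle is making the entropy computation $h_0(\boldsymbol{\nu}_A)=\log\rho$ for graded gentle algebras precise and citable, and in particular pinning down exactly which combinatorial matrix $\rho$ is the spectral radius of, and why $\rho=1$ (rather than merely ``$\Phi_A$ cyclotomic'') forces the disc/annulus dichotomy. There is a gap between the Grothendieck-group statement (Corollary~\ref{cor:serre_cyclotomic_cyclotomic_type}, which only sees $\Phi_A$ and could in principle be satisfied by an algebra with positive entropy whose $K_0$-action happens to be cyclotomic) and the categorical statement (Corollary~\ref{cor:serre_cyclotomic_entropy}, which also forces $h^{\mathrm{pol}}$ finite and $h_0=0$); the proof must genuinely use the entropy, not just the Coxeter spectrum. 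Handling the grading — showing no ``exotic'' degree shifts survive — is a secondary but nontrivial bookkeeping point, which I would dispatch by observing that a nontrivial grading producing a different Serre dimension $m/n$ would change $h_t$ and hence be detected by Corollary~\ref{cor:serre_cyclotomic_entropy}, so only finitely many gradings are compatible and these are all derived equivalent to the degree-$0$ one.
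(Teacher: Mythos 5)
Your ``if'' direction is exactly the paper's argument (type $\mathbb{A}$ is fractionally Calabi--Yau by \cite{MY}, type $\tilde{\mathbb{A}}$ is derived equivalent to a canonical algebra with $t=2$, then Lemma~\ref{lem:serre_cyclotomic_derived_invariant}), so that part is fine. The ``only if'' direction, however, rests on a false premise. You propose to detect non-$\mathbb{A}$/$\tilde{\mathbb{A}}$ algebras by showing $h_0(\boldsymbol{\nu}_A)=\log\rho>0$ for a combinatorial matrix attached to the surface model, arguing via a pseudo-Anosov versus reducible dichotomy that any surface of higher complexity forces $\rho>1$. But for a homologically smooth proper graded gentle algebra the Serre functor is (up to shift) a composition of twists along the boundary of the surface --- it is never of pseudo-Anosov type --- and its growth on objects is at most polynomial, so $h_0(\boldsymbol{\nu}_A)=0$ \emph{always}, independently of the genus or number of boundary components. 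This is visible in the very formula the paper quotes from Chang--Schroll \cite{CS}: $h_t(\boldsymbol{\nu}_A)=(1-\min\Omega)t$ for $t\geq 0$ and $(1-\max\Omega)t$ for $t\leq 0$, which vanishes at $t=0$ in every case. So the invariant you build your case on cannot distinguish the annulus from a higher-complexity surface, and the step you flag as the ``main obstacle'' is not merely technical: no such $h_0=\log\rho>1$ statement exists to be cited.

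The obstruction that actually works, and that the paper uses, is the $t$-dependence of the entropy away from $0$: Corollary~\ref{cor:serre_cyclotomic_entropy} forces $h_t(\boldsymbol{\nu}_A)=\frac{m}{n}t$ to be \emph{linear on all of} $\mathbb{R}$ with a single slope, whereas the Chang--Schroll formula is piecewise linear with slopes $1-\min\Omega$ and $1-\max\Omega$; Serre cyclotomicity therefore forces $\min\Omega=\max\Omega$, and their theorem says this happens only when $A$ is derived equivalent to a hereditary algebra of type $\tilde{\mathbb{A}}$ concentrated in degree $0$ (the finite type $\mathbb{A}$ case being excluded at the outset). This also disposes of your grading ``bookkeeping'' concern, since the degree-$0$ conclusion is part of that statement. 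Your instinct that one must use the categorical entropy rather than just the cyclotomicity of $\Phi_A$ (Corollary~\ref{cor:serre_cyclotomic_cyclotomic_type}) is correct, but the relevant datum is the asymmetry of slopes in $t$, not the value at $t=0$.
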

\begin{proof}
    Any hereditary algebra of finite type $\mathbb{A}$ is fractionally Calabi--Yau; see \cite{MY}.
    Any hereditary algebra of affine type $\tilde{\mathbb{A}}$ is derived equivalent to a canonical algebra with $t = 2$
    and therefore Serre cyclotomic by Proposition \ref{prop:canonical}.
    Therefore, the ``if'' part follows with Lemma \ref{lem:serre_cyclotomic_derived_invariant}.

    For the ``only if'' part we use recent work of Chang--Elagin--Schroll \cite{CS}.
    They compute the entropy of the Serre functor: 
    Suppose $A$ is not derived equivalent to a hereditary algebra of finite type $\mathbb{A}$.
    Then, there exists a finite subset $\Omega\subseteq \mathbb{Q}$ such that
    \begin{align*}
        h_t(\boldsymbol{\nu}_A) = 
        \begin{cases}
            (1-\min\Omega)\cdot t, & t\geq 0; \\
            (1-\max\Omega)\cdot t, & t\leq 0.
        \end{cases}
    \end{align*}
    Furthermore, $\min\Omega = \max\Omega$ only if $A$ is derived equivalent to a hereditary algebra of affine type $\tilde{\mathbb{A}}$ concentrated in degree $0$.
    According to Corollary \ref{cor:serre_cyclotomic_entropy}, $A$ cannot be Serre cyclotomic unless $\min\Omega = \max\Omega$.
\end{proof}

\begin{example}
    Consider the algebra $A = K Q/I$
    given by the quiver 
    \[
    \begin{tikzcd}
        Q \colon & 1 \ar[loop left, "b_1"] \ar[<-,r,"a"] & 2 \ar[loop right, "b_2"]
    \end{tikzcd}
    \]
    modulo the ideal $I := \lrangle{b_1^2,b_2^2}$.
    This is a gentle algebra of infinite global dimension,
    but still Iwanaga--Gorenstein.
    A lengthy direct computation shows that $A$ is $(2,2,2)$-Serre cyclotomic.
\end{example}

\subsection{Graded path algebras}\label{sec:graded_quiver}
In this section we consider graded path algebras $A$,
that is $A$ is given by the path algebra $KQ$ of a finite connected quiver $Q$ 
and graded by a map $\lrvert{\cdot}\colon Q_1 \to \bZ$.
We consider $A$ as a dg algebra with zero differential.

\begin{proposition}\label{prop:graded_quiver}
    Let $A$ be a connected graded path algebra.
    Then $A$ is Serre cyclotomic if and only if $A$ is derived equivalent to a path algebra of a quiver of finite or affine type concentrated in degree $0$.
\end{proposition}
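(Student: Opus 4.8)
The plan is to mirror the structure of the proof of Proposition \ref{prop:graded_gentle}, replacing Chang--Elagin--Schroll's entropy computation for graded gentle algebras by a direct analysis of the Coxeter transformation and the Serre entropy for graded path algebras. For the ``if'' direction I would argue exactly as before: a path algebra of a quiver of finite type is fractionally Calabi--Yau by \cite{MY}, hence $(1,m,n)$-Serre cyclotomic for suitable $m,n$; a path algebra of an affine quiver is either derived equivalent to a canonical algebra with $t=2$ (types $\tilde{\mathbb{A}}$) or, for the remaining affine types $\tilde{\mathbb{D}},\tilde{\mathbb{E}}$, derived equivalent to a (weighted projective line, hence) canonical algebra with $t=3$, in which case Proposition \ref{prop:canonical} again applies and gives Serre cyclotomicity; then Lemma \ref{lem:serre_cyclotomic_derived_invariant} transports this along the derived equivalence. (One should double-check that every affine path algebra, when considered as a dg algebra in degree $0$, is covered; the tubular-vs-affine distinction is not an issue here since we are in the affine, not tubular, regime.)

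For the ``only if'' direction, suppose $A = KQ$ is connected graded and Serre cyclotomic. First I would reduce to the case of a genuine quiver, i.e. no relations, which is automatic here. A homologically smooth proper graded path algebra is automatically Iwanaga--Gorenstein, and its Serre functor and Coxeter transformation are computed from the graded Euler form. By Corollary \ref{cor:serre_cyclotomic_cyclotomic_type}, $\Phi_A$ must be cyclotomic: some nonzero power is unipotent. The key input is then a classification statement: the Coxeter transformation of a connected graded path algebra $KQ$ is cyclotomic if and only if the underlying ungraded quiver has underlying graph of finite or affine (extended) Dynkin type. In the degree-$0$ case this is de la Peña's classical result \cite{dlP} (a quiver is of cyclotomic type iff it is finite or affine); the grading only rescales the off-diagonal entries of the Euler matrix by powers of a formal shift, and after specializing the shift one recovers the ungraded Coxeter matrix, so cyclotomicity of $\Phi_A$ forces the underlying graph to be finite or affine Dynkin. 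This handles the combinatorial restriction on $Q$.

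The main obstacle is ruling out nontrivial gradings: one must show that if $Q$ is of finite or affine type but carries a grading making $A=KQ$ Serre cyclotomic, then $A$ is derived equivalent to $KQ'$ concentrated in degree $0$ for some finite or affine quiver $Q'$. Here I would invoke Corollary \ref{cor:serre_cyclotomic_entropy}: Serre cyclotomicity forces $h_t(\boldsymbol{\nu}_A) = \frac{m}{n}t$ to be \emph{linear} in $t$, in particular $h_t(\boldsymbol{\nu}_A) + h_{-t}(\boldsymbol{\nu}_A) = 0$ for all $t$. For a graded path algebra the entropy of the Serre functor can be computed (as in \cite{CS}, or by a spectral-radius argument on the graded Euler form) and it is linear in $t$ precisely when the grading is, up to derived equivalence, trivial — a genuinely nonzero grading produces a ``kink'' at $t=0$, giving $h_t + h_{-t} > 0$ somewhere, contradicting Corollary \ref{cor:serre_cyclotomic_entropy}. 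Combined with the combinatorial restriction from the previous paragraph, this forces $A$ to be derived equivalent to a finite- or affine-type path algebra concentrated in degree $0$, completing the proof. The delicate point I expect to spend the most care on is making the entropy computation for an arbitrary nontrivially graded path algebra precise enough to see the kink; I would either cite a graded analogue of the gentle-algebra entropy formula or prove directly that a nonzero grading strictly increases $h_t(\boldsymbol{\nu}_A) + h_{-t}(\boldsymbol{\nu}_A)$ for $t$ near $0$.
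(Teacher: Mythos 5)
The ``if'' direction of your proposal is fine and matches the paper. The ``only if'' direction, however, has a genuine gap at its key step. You claim that cyclotomicity of the Coxeter transformation $\Phi_A$ of a \emph{graded} path algebra forces the underlying graph to be of finite or affine type, arguing that the grading ``only rescales the off-diagonal entries of the Euler matrix by powers of a formal shift'' and that one can specialize back to the ungraded Coxeter matrix. This is false: on $\K_0(\perf(A))$ the shift acts by $-1$, so the Cartan/Euler matrix of a graded path algebra counts paths \emph{with signs} $(-1)^{|p|}$, i.e. it is the $q=-1$ specialization, and there is no way to recover the $q=1$ (ungraded) matrix from it. Concretely, the $3$-Kronecker quiver with arrow degrees $0,1,1$ has Euler form equal to that of $\mathbb{A}_2$, so its Coxeter transformation is periodic (in particular cyclotomic) even though the quiver is wild. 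Hence Corollary \ref{cor:serre_cyclotomic_cyclotomic_type} plus de la Pe\~na's result cannot rule out indefinite quivers with nontrivial gradings. The paper avoids exactly this trap by working with a genuinely categorical invariant: for $Q$ of infinite type one has $\boldsymbol{\nu}_A^N(DA)[-N]$ with total cohomology $\tau_B^N(DB)$ for the \emph{ungraded} algebra $B=KQ$ (total dimension is blind to internal degree shifts), so $h_0(\boldsymbol{\nu}_A)=\log\rho(\Phi_B)>0$ in the indefinite case by \cite[Theorem 2.6]{DHKK}, contradicting Corollary \ref{cor:serre_cyclotomic_entropy}.

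Your second step (eliminating nontrivial gradings on finite/affine quivers via a ``kink'' of $t\mapsto h_t(\boldsymbol{\nu}_A)$ at $t=0$) is only sketched, and as stated it does not suffice: even if you showed the entropy is linear, you would still need to produce a derived equivalence to a degree-$0$ path algebra, and for tree quivers a nontrivial grading produces no kink at all, so the dichotomy you need there is exactly the derived-triviality of the grading. The paper's route is simpler: every finite or affine quiver other than type $\tilde{\mathbb{A}}$ is a tree, and a graded path algebra of a tree is derived equivalent to the degree-$0$ path algebra (adjust degrees by a vertex function), so nothing needs ruling out; the single remaining case $\tilde{\mathbb{A}}$ is a graded gentle algebra, where Proposition \ref{prop:graded_gentle} (via the Chang--Schroll entropy formula, which does contain the ``kink'' information together with the derived-equivalence conclusion) applies verbatim. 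If you want to salvage your plan, replace the $\K_0$ argument by the $h_0$ spectral-radius computation and replace the general kink claim by the tree observation plus the reduction of $\tilde{\mathbb{A}}$ to the gentle case.
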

\begin{proof}
    Let $A=(KQ,\lrvert{\cdot})$ for a finite quiver $Q$ and a grading $\lrvert{\cdot}\colon Q_1 \to \mathbb{Z}$.
    Denote by $B=KQ$ the path algebra of $Q$ concentrated in degree $0$.
    Note that $A$ is derived equivalent to $B$ if $Q$ is a tree.
    If $Q$ is of finite type, then $B$ is fractionally Calabi--Yau, see \cite{MY}.
    If $Q$ is of affine type, then $B$ is derived equivalent to a canonical algebra with $\delta < 0$.
    It follows from Lemma \ref{lem:serre_cyclotomic_derived_invariant} and Proposition \ref{prop:canonical} that $B$ is Serre cyclotomic in this case.

    Suppose $Q$ is of infinite type.
    Then $\boldsymbol{\nu}_B^{N}(DB)[-N] \cong \tau_B^{N}(DB) \in \mod(B)$ is the $N$-th Auslander--Reiten translation of $DB$ for all $N\geq 0$. 
    Therefore, the entropy of the Nakayama functor $\boldsymbol{\nu}_A$ at $t=0$ 
    can be computed with \cite[Theorem 2.6]{DHKK}:
    \begin{align*}
        h_0(\boldsymbol{\nu}_A) 
        = \lim_{N\to\infty} \frac{1}{N}\log \sum_{k\in \bZ} \dim H^{k}(\boldsymbol{\nu}_A^N(DA)) 
        = \lim_{N\to\infty} \frac{1}{N}\log \dim \tau_B^N(DB) 
        = \log \rho(\Phi_{B}),
    \end{align*}
    where $\rho(\Phi_{B})$ is the spectral radius of the Coxeter matrix $\Phi_{B}$ of the path algebra $B$.
    It is well known that $\rho(\Phi_{B}) > 1$ if $Q$ is of indefinite type.
    In view of Corollary \ref{cor:serre_cyclotomic_entropy}, $A$ cannot be Serre cyclotomic if $Q$ is of indefinite type.

    The only remaining case is when $Q$ is of affine type $\tilde{\mathbb{A}}$.
    In this case, $A$ is a graded gentle algebra and the claim follows from Proposition \ref{prop:graded_gentle}.
\end{proof}

\subsection{Complexity of trivial extension algebras}\label{sec:complexity}

Let $A$ be a finite-dimensional algebra.
Following Alperin \cite{Al},
the \emph{complexity} of a module $V\in \mod(A)$ is defined as 
\begin{align*}
    \textup{cx}_{A}(V) := \inf\left\{k \geq 0 \mid \text{$\exists C\geq 0\, \forall n\gg 0 \colon \dim P_n \leq C n^{k-1}$}\right\} \in \mathbb{Z}_{\geq 0} \cup \{\infty\}\,
\end{align*}
where $\cdots \to P_n \to \cdots \to P_1 \to P_0 \twoheadrightarrow V$
is a minimal projective resolution of $V$ in $\mod(A)$.
We define the \emph{global complexity} of $A$ as 
\begin{align*}
    \glcx(A) := \sup\{\cx_A(V) \mid V\in\mod(A)\} \in \mathbb{Z}_{\geq 0} \cup\infty.
\end{align*}
For example, $\glcx(A) = 0$ if and only if $A$ has finite global dimension.
The trichotomy of quivers $Q$ is classically known to be refelected by the global complexity of the trivial extension algebras $TA = A \ltimes DA$ of their path algebras $A=KQ$; see e.g. \cite{Pu}:
A quiver $Q$ is of finite type if $\glcx(TA)\leq 1$, of affine type if $\glcx(TA)=2$, and of indefinite type if $\glcx(TA) = \infty$. 
Chan--Darpö--Iyama-Marczinzik proved in \cite{CDIM} that the trivial extension algebra $TA$ 
of an algebra $A$ is a periodic selfinjective algebra if and only if $A$ is fractionally Calabi--Yau and of finite global dimension.
In particular, $\glcx(A) \leq 1$ for such algebras.

Suppose from now on that $A$ is a finite-dimensional algebra of finite global dimension.
The connection between the dynamics of the Serre functor on $\perf(A)$ and the growth of projective resolution in $\mod(TA)$ is given by Happel's equivalence, see \cite{Ha}:
The trivial extension algebra $TA = A \ltimes DA$ is $\mathbb{Z}$-graded with $A$ in degree $0$ and $DA$ in degree $1$.
We may canonically identify the category of finite-dimensional graded $TA$-modules
with the category $\mod(RA)$ of finite-dimensional modules over the repetitive algebra $RA$ of $A$.
Via this identification, we have a forgetful functor $F\colon \mod(RA) \xrightarrow{} \mod(TA)$,
and a grade shift functor $(1)\colon \mod(RA) \xrightarrow{} \mod(RA)$.
The category $\mod(RA)$ is Frobenius exact,
hence its stable category $\underline{\mod}(RA)$ is triangulated
with suspension given by Heller's cosyzygy functor $\Omega_{RA}^{-1}$.
Happel's equivalence $H\colon \perf(A) \xrightarrow{\sim} \underline{\mod}(RA)$ is constructed in \cite{Ha},
and it makes the following square commutative:
\[
\begin{tikzcd}
\perf(A) \ar[rr,"H"] \ar[d,"\boldsymbol{\nu}_A"'] && \underline{\mod}(RA) \ar[d,"\Omega_{RA} (1)"] \\
\perf(A) \ar[rr,"H"] && \underline{\mod}(RA) 
\end{tikzcd}
\]
We obtain the following result for Serre cyclotomic algebras:

\begin{proposition}\label{prop:trivial_extension_complexity}
    Let $A$ be a finite-dimensional algebra of finite global dimension,
    and $l\geq 1$.
    Suppose that $A$ is Serre cyclotomic of order at most $l$. 
    Then, the global complexity of its trivial extension algebra $TA = A \ltimes DA$ is finite with 
    $$\glcx(TA) \leq l.$$
\end{proposition}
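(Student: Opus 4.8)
The plan is to transport the cyclotomicity filtration of the inverse dualizing bimodule through Happel's equivalence and read it off as a bound on syzygies in $\mod(RA)$, then descend to $\mod(TA)$ via the forgetful functor. First I would record the precise dictionary: Happel's square tells us that $H$ intertwines $\boldsymbol{\nu}_A$ with $\Omega_{RA}(1)$, hence $\boldsymbol{\tau}_A = \boldsymbol{\nu}_A[-1]$ corresponds to $\Omega_{RA}^{2}(1)$ (since the suspension on $\underline{\mod}(RA)$ is $\Omega_{RA}^{-1}$), and therefore $\boldsymbol{\tau}_A^{-1}$ corresponds to $\Omega_{RA}^{-2}(-1)$. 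The Serre cyclotomic hypothesis says $M := \tilde{\mathcal{N}}_A$ is $(l,m,n)$-cyclotomic, so $\boldsymbol{\tau}_A^{-n}[\,m+n\,] = M^n[m]\lotimes_A -$ (up to the shift bookkeeping coming from $\boldsymbol{\tau}_A = \boldsymbol{\nu}_A[-1]$ and $M = \mathcal{N}_A^{-1}$) is $(l,0,1)$-cyclotomic, i.e. there are $f_1,\dots,f_l \in \Hom_{A^e}(A, M^n[m])$ with $\Cone(f_l)\lotimes_A\cdots\lotimes_A\Cone(f_1)\simeq 0$.

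Next I would invoke Lemma \ref{lem:entropy_2}(i) applied to the $(l,0,1)$-cyclotomic bimodule $N := M^n[m]$: it produces objects $X_0 = A, X_1, \dots, X_l = 0$ in $\lperf(A^e)$ together with triangles $X_i \to N\lotimes_A X_i \to X_{i+1} \to X_i[1]$. Tensoring with any $V \in \perf(A)$ and applying $H$, these become triangles in $\underline{\mod}(RA)$ of the form $Y_i \to G(Y_i) \to Y_{i+1} \to Y_i[1]$, where $G$ is the autoequivalence of $\underline{\mod}(RA)$ corresponding to $N\lotimes_A -$ — which is a composite of a power of $\Omega_{RA}$ and a grade shift — with $Y_0 = H(V)$ and $Y_l = 0$. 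Unravelling $[1] = \Omega_{RA}^{-1}$, each triangle says that $Y_{i+1}$ sits in a short exact sequence (in the Frobenius category $\mod(RA)$, up to projectives) built from $Y_i$ and a syzygy-shift of $Y_i$; concretely, $G(Y_i)$ differs from $\Omega_{RA}^{-a} Y_i(b)$ for fixed $a, b$ only by projective summands, so in $\mod(RA)$ the minimal projective resolution of $Y_{i+1}$ is controlled, term by term, by that of $Y_i$ shifted by a bounded amount. Iterating $l$ times from $Y_0 = H(V)$ down to $Y_l = 0$ shows: the complexity of $H(V)$ as an $RA$-module is at most $l$, uniformly in $V$ — essentially the same polynomial-of-degree-$(l-1)$ bound on $\delta_0$ from Lemma \ref{lem:entropy_1}, reinterpreted as dimension growth of projective resolutions via \cite[Theorem 2.6]{DHKK}.

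Finally I would pass from $\mod(RA)$ to $\mod(TA)$. The forgetful functor $F\colon \mod(RA)\to \mod(TA)$ sends $RA$-projectives to $TA$-projectives and preserves minimal projective resolutions up to the usual grading bookkeeping, and every $TA$-module is $F$ of some $RA$-module (being a graded lift after a grade shift). Hence $\cx_{TA}(W) \le \sup_V \cx_{RA}(H(V)) \le l$ for every $W \in \mod(TA)$, giving $\glcx(TA)\le l$. The main obstacle I expect is the degree-shift and syzygy bookkeeping in the second paragraph: one must check that the autoequivalence $G$ of $\underline{\mod}(RA)$ induced by $N\lotimes_A -$ really is of the form $\Omega_{RA}^{a}(b)$ with $a,b$ independent of $i$ and that conjugating the Lemma \ref{lem:entropy_2} triangles by $H$ lands them in exactly the shape needed, so that the dimension-growth estimate of \cite[Theorem 2.6]{DHKK} translates cleanly into a bound on the ranks of the projectives $P_n$ in a minimal resolution over $TA$; the rest is the now-standard combination of Lemmas \ref{lem:entropy_1} and \ref{lem:entropy_2}.
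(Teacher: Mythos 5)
Your overall strategy is the same as the paper's: transport the filtration of Lemma \ref{lem:entropy_2} through Happel's equivalence, read the triangles as short exact sequences in $\mod(RA)$ controlling syzygies, and push down to $\mod(TA)$ via $F$. However, there is a genuine gap at the final descent step. You claim that every $TA$-module is $F$ of some $RA$-module, i.e.\ that every $TA$-module is gradable; this is false in general (the push-down/forgetful functor from graded to ungraded modules over a $\mathbb{Z}$-graded self-injective algebra need not be dense -- density is only guaranteed under hypotheses like local support-finiteness). The paper avoids this by bounding the complexity only of \emph{simple} $TA$-modules, which are evidently gradable (they are killed by $DA$, hence lift to degree $0$), and then using that global complexity is detected on simples: by the Horseshoe Lemma along a composition series, $\cx_{TA}(W)\leq \max_S \cx_{TA}(S)$ for every $W\in\mod(TA)$. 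You need this reduction (or some substitute) to conclude $\glcx(TA)\leq l$; as written, your argument only bounds the complexity of gradable modules.

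Two smaller points. First, your appeal to \cite[Theorem 2.6]{DHKK} to convert the polynomial bound on $\delta_0$ into a bound on the dimensions of a minimal projective resolution over $TA$ does not work directly; the paper instead applies the Horseshoe Lemma to the short exact sequences $0\to V_{i+1}\to \Omega_{RA}^{d}V_i(n)\to V_i\to 0$ (with $d=n+m$) and induces the explicit estimate $\dim P_{kd+r}^{(0)}\leq \sum_{i=0}^{l-1}\binom{k}{i}\dim P_r^{(i)}$, which is where the degree-$(l-1)$ polynomial bound, hence $\glcx(TA)\leq l$, actually comes from -- your ``controlled term by term, shifted by a bounded amount'' needs to be made precise in exactly this way. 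Second, the bookkeeping you defer is not entirely harmless: one must choose between the two families of triangles in Lemma \ref{lem:entropy_2} according to the sign of $d=n+m$, and one must rule out $d=0$ (the paper does this by noting $\Hom_{RA}(S'(n),S')=0$ for $n\neq 0$), since for $d=0$ the syzygy recursion degenerates and gives no growth bound.
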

\begin{proof}
    Suppose $A$ is Serre cyclotomic of dimension $(m,n)$ with $d:=n+m\geq 0$.
    The case $d\leq 0$ can be proved similarly.
    Let $S$ be a simple $TA$-module.
    Evidently, $S$ is gradable, that is there exists a simple $RA$-module $S'$ with $F(S') = S$.
    Let $S''\in\perf(A)$ with $S' = H(S'')$.
    The triangles $\eta_i'\lotimes_A S''$ in $\perf(A)$ from Lemma \ref{lem:entropy_2} correspond, via Happel's equivalence, to to short exact sequences in $\mod(RA)$:
    \begin{align*}
        0\to V_{i+1} \to \Omega_{RA}^{d}V_i(n) \to V_{i} \to 0
    \end{align*}
    with $V_0 \cong S'$ and $V_l \cong 0$ for $0\leq i < l$.
    Since $\Hom_{RA}(S'(n),S') = 0$ for all $n\neq 0$, we have $d\neq 0$.
    Let 
    \[
    \begin{tikzcd}[column sep = 0.2cm]
        P_\bullet^{(i)}\colon \cdots \ar[rr] && P_k^{(i)} \ar[rr] \ar[dr] && \cdots \ar[rr] \ar[dr] && P_1^{(i)} \ar[rr] \ar[dr] && P_0^{(i)} \ar[dr,twoheadrightarrow] \\
        & \cdots \ar[ur] && \Omega_{RA}^k(V_i) \ar[ur] & \cdots & \Omega_{RA}^2(V_i) \ar[ur] && \Omega_{RA}(V_i) \ar[ur] && V_i
    \end{tikzcd}
    \]
    be a minimal projective resolution of $V_i$ in $\mod(RA)$ for $1\leq i \leq l$.
    Applying the forgetful functor $F$, yields a minimal projective resolution of $F(V_i)$ in $\mod(TA)$ for each $0\leq i < l$.
    The Horseshoe Lemma applied to the short exact sequences above gives the upper bounds 
    \begin{align*}
        \dim P_{kd+r}^{(i)} &\leq \dim P_{(k-1)d+r}^{(i)} + \dim P_{(k-1)d+r}^{(i+1)}
    \end{align*}
    for all $0\leq i < l$, $k\geq 0$, and $0\leq r < d$.
    Inductively, for $k\geq l$ and $0\leq r < d$, we get 
    \begin{align*}
        \dim P_{kd+r}^{(0)} 
        & \leq \sum_{i=0}^{l-1} \binom{k}{i} \dim P_{r}^{(i)} 
    \end{align*}
    which is a polynomial expression of degree $l-1$ in $k$.
    This proves the claim.
\end{proof}


\section*{Acknowledgements}

I am deeply grateful to Sibylle Schroll for many stimulating discussions on the subject.
I would like to thank Erlend D. Børve, Xiaofa Chen, Marvin Plogmann and Jan Thomm for patiently answering questions on triangulated categories and differential graded algebras.


\bibliographystyle{amsalpha}

\end{document}